\numberwithin{equation}{section}
\newcommand\mailto[1]{\href{mailto:#1}{\tt #1}}
\def\be{\begin{equation}}
\def\ee{\end{equation}}
\def\bea{\begin{align}}
\def\eea{\end{align}}
\newtheorem{theorem}{Theorem}[section]
\newtheorem{definition}[theorem]{Definition}
\theoremstyle{remark}
\DeclareMathOperator{\vol}{vol}
\newcommand{\M}{\mathcal{M}}
\newcommand{\N}{\mathcal{N}}
\newcommand{\E}{\mathcal{E}}
\newcommand{\R}{\mathcal{U}}
\newcommand{\Lie}{\mathcal{L}}
\newcommand{\order}{\mathcal{O}}
\newcommand{\bigo}{\mathcal{O}}
\newcommand{\X}{X}
\newcommand{\Me}{\bar{\mathcal{M}}}
\newcommand{\Mee}{\hat{\mathcal{M}}}
\newcommand{\Xt}{\tilde{X}}
\newcommand{\Hc}{\mathscr{H}}
\newcommand{\n}{n}
\newcommand{\m}{\ell}
\newcommand{\rr}{k}
\newcommand{\flow}{\varphi}
\newcommand{\flowN}{\phi}
\def\blfootnote{\gdef\@thefnmark{}\@footnotetext}
\begin{document}
\thispagestyle{empty}

\renewcommand{\thefootnote}{\fnsymbol{footnote}}

\vspace*{1.5cm}
\begin{center}

{\bf{\Large
On dissipative symplectic integration with \\[.4em] applications
to gradient-based optimization
}}

\vspace{1cm}

{\bf Guilherme Fran\c ca,}$^{\!1,2,}\footnote{\mailto{guifranca@gmail.com}}$%
~{\bf Michael I. Jordan,}$^{\!1}$%
~{\bf and Ren\' e Vidal}$^{\,2}$%

\vspace{0.5cm}

${}^{1}${\it University of California, Berkeley, CA 94720, USA}\\[.2em]
${}^{2}${\it Johns Hopkins University, MD 21218, USA}

\vspace{2.0cm}

{\bf Abstract}
\vspace{-1em}
\end{center}

Recently, continuous-time dynamical systems have proved useful in providing conceptual and quantitative insights into gradient-based optimization, widely used in modern machine learning and statistics.  An important question that arises in this line of work is how to discretize the system in such a way that its stability and rates of convergence are preserved.
In this paper we propose a geometric framework in which such discretizations can be realized systematically, enabling the derivation of ``rate-matching'' algorithms without the need for a discrete convergence analysis. More specifically, we show that a generalization of symplectic integrators to nonconservative and in particular dissipative Hamiltonian systems is able to preserve rates of convergence  up to a controlled error.
Moreover,
such methods preserve a shadow Hamiltonian despite the absence of a
conservation law, extending key
results of symplectic integrators to
nonconservative cases.
Our arguments rely on a combination of backward error analysis with
fundamental results from symplectic geometry.
We stress that although the original motivation for this work
was the application to optimization, where dissipative
systems play a natural role,
they are fully general and not only provide a differential
geometric framework for dissipative
Hamiltonian systems but also substantially extend the theory of
structure-preserving integration.

\renewcommand*{\thefootnote}{\arabic{footnote}}
\setcounter{footnote}{0}
\setcounter{page}{0}
\newpage
\thispagestyle{empty}
\tableofcontents
\newpage

\section{Introduction}

A recent line of research at the interface of machine learning and optimization focuses on establishing connections between continuous-time dynamical systems and gradient-based optimization methods \cite{Candes:2016,Wibisono:2016,Franca:2018b,Franca:2018,Franca:2019,Krichene:2015,Wilson:2016,Scieur:2017,Betancourt:2018,Zhang:2018,Shi:2019,Muehlebach:2019,Diakonikolas:2019}.
From this perspective, an  optimization algorithm corresponds to a particular discretization of a differential equation.  Moreover,  important \emph{accelerated methods} such as Nesterov's method~\cite{Nesterov:1983} and Polyak's heavy ball method~\cite{Polyak:1964} are modeled as  second-order differential equations with a dissipative term \cite{Candes:2016,Wibisono:2016,Franca:2018,Franca:2018b,Franca:2019}.
There are advantages to working in continuous time. In particular, the stability and convergence analysis of a continuous system tends to be simpler and more transparent, making use of general tools such as Lyapunov stability theory and variational formulations. The traditional discrete analysis is usually only applicable on a case-by-case basis and often requires painstaking algebra. This is an unsatisfactory state of affairs given that accelerated optimization methods are the workhorses behind many of the empirical success stories in large-scale machine learning.

A particularly useful step has been the development of a variational perspective on acceleration methods~\cite{Wibisono:2016}. This framework, which involves the definition the so-called \emph{Bregman Hamiltonian}, places momentum-based methods such as Nesterov acceleration into a  larger class of dynamical systems and has accordingly helped to demystify the notion of ``acceleration'' in an optimization context.  Two difficulties arise, however, when one attempts to further exploit and characterize this class of systems. First, there are many different ways to discretize a continuous system.
A naive discretization may be unable to preserve rates of convergence---i.e., rates of decay to lower-energy level  sets---and may even lead to an  unstable algorithm.
Moreover, it is largely unknown if there exists an underlying principle from which one can construct such ``rate-matching'' discretizations.  Thus a fundamental question arises:
\begin{center}
\emph{Which classes of discretizations are capable of preserving  the  rates of convergence  \\ of the continuous-time dynamical systems of interest in optimization?}
\end{center}

A second difficulty is that the Hamiltonian formalism has traditionally been applied to conservative systems; in particular, systems characterized 
by oscillations.  Such
behavior is incommensurate with the desire to converge to an optimum---a system that converges towards a limit cycle may have favorable stability properties, but the presence of limit cycles may preclude convergence to a point.  Thus we have a second fundamental
question:
\begin{center}
\emph{Can we map discrete-time algorithms into dissipative continuous-time dynamical \\ systems  that provide analytical insight into the behavior of the original algorithm?}
\end{center}
Clearly these two questions are related.  Indeed, the ability to map between dynamical systems while preserving rates
may be seen as a form of ``invariance''---although different from the conservation laws arising from Noether's theorem.

In this paper we attempt to provide answers to the above  questions. Introducing a class of \emph{dissipative Hamiltonian systems}, we combine fundamental results from symplectic geometry \cite{Berndt,Aebischer} and backward error analysis \cite{Benettin:1994,Reich:1999,Hairer:1994} to establish a general quantitative guarantee for the convergence of discrete algorithms based on these continuous dynamics.
More specifically, we propose a class of discretizations, which we refer to as \emph{presymplectic integrators}, that are designed to preserve a fundamental geometric structure associated with nonconservative, and in particular dissipative, Hamiltonian systems.%
\footnote{This will be made precise later but briefly the idea is that the phase space of a nonconservative Hamiltonian system is a \emph{presymplectic manifold}, endowed with a closed \emph{degenerate} symplectic 2-form, which is exactly preserved by a presymplectic integrator.}

Presymplectic integrators consist of a generalization of
the well-known family of \emph{symplectic integrators}
\cite{Reich,Hairer,SanzSerna:1992,McLachlan:2006,Quispel:2018}
which have been developed in the setting of \emph{conservative} Hamiltonian systems. The most important property of symplectic integrators is that in addition to preserving the symplectic structure, they exactly conserve a perturbed or shadow  Hamiltonian \cite{Benettin:1994}, thus ensuring long-term stability. This crucial result relies on the fact that the Hamiltonian is a constant of motion.
On the other hand, there are relatively few results on structure-preserving methods for dissipative systems,
although this has been the subject of a nascent literature~\cite{Moore:2016,Moore:2017,Moore:2019,Shang:2020,Nicolis:2019,Nicolis:2019b}.
It is thus unknown if the key stability properties of symplectic integrators can be extended to dissipative cases precisely because a conserved quantity is no longer available.
We will show that presymplectic integrators allow such properties to be extended into a nonconservative setting. In particular, we show that they
preserve a---time-dependent---shadow  Hamiltonian and accordingly exhibit long-term stability.
Our argument relies on a \emph{symplectification} procedure where
the nonconservative system is embedded in the phase space of a higher-dimensional
conservative  system.

Although the principal goal of our work is to bring a dissipative physical systems perspective to gradient-based optimization,
we note that our results are fully general and not tied to
applications to optimization; rather, they yield a general differential geometric framework for the study of nonautonomous and dissipative Hamiltonian systems.  They also extend the existing theory of
structure-preserving integration to such cases. Our approach may therefore be of interest in other fields where the simulation of dissipative
systems is important, such as out-of-equilibrium statistical mechanics, thermodynamics of open systems,
complex systems, nonlinear dynamics, etc.

This paper is organized as follows.
In Appendix~\ref{diff_geo_sec} we introduce notation and recall the basic concepts from differential geometry that are needed throughout the paper.
In Section~\ref{summary} we provide a high level overview of the main outline of our analysis with a focus on the implications to optimization. Section~\ref{dissip_flow} introduces ideas from backward error analysis and symplectic integrators---for conservative systems---presenting independent geometric proofs of earlier results \cite{Benettin:1994,Hairer:1994,Reich:1999} so as to anticipate our generalizations to
dissipative systems.
In Section~\ref{ham_sys_sec} we introduce nonconservative Hamiltonian systems from the point of view of symplectic geometry
and construct their symplectification.
We then define presymplectic integrators  and argue that they extend the useful properties of symplectic integrators into nonconservative settings. In Section~\ref{implication_optimization}
we consider the implications of this framework for preserving convergence rates and stability of dissipative Hamiltonian systems, which in particular justify this approach for solving optimization problems.
In Section~\ref{bregman_sec} we construct  explicit presymplectic integrators for the Bregman Hamiltonian in full generality.
In Section~\ref{numerical} we provide numerical evidence that support our
theoretical results.
Section~\ref{conclusion} contains our our final remarks.

\section[Overview of the implications to accelerated  optimization]{Overview of the implications to \\ accelerated  optimization}
\label{summary}

Given an $n$-dimensional smooth manifold $\M$ and a function $f:\M \to \mathbb{R}$, consider the problem
\be \label{optimization}
f(q^\star) = \min_{q \in \M} f(q).
\ee
Let $H = H(t, q, p)$ be an explicitly \emph{time-dependent} Hamiltonian over the phase space $(q, p) \in T^\star \M$---the cotangent bundle of $\M$ (see
Appendix~%
\ref{diff_geo_sec})---
which determines dynamical evolution through Hamilton's equations:
\be \label{ham_motion}
\dfrac{d q^j}{dt} = \dfrac{\partial H}{\partial p_j}, \qquad \dfrac{dp_j }{dt} = - \dfrac{\partial H}{\partial q^j},
\ee
for $j=1,\dotsc,\n$.
We will design dissipative systems whose trajectories tend to a low energy level set that is consistent with a minimum  of $f$.%
\footnote{By way of contrast, a classical conservative system has a \emph{time-independent} Hamiltonian, $H = H(q,p)$, such that $\tfrac{dH}{dt} = 0$, implying that trajectories oscillate around a minimum instead of converging.}
Specifically, we consider systems arising from the following general family of Hamiltonians:
\be \label{gen_ham}
H \equiv e^{-\eta_1(t)} T(t,q,p) + e^{\eta_2(t)} f(q),
\ee
where $\eta_1(t)$ and $\eta_2(t)$ are positive and monotone increasing functions that are responsible for introducing dissipation.  The kinetic energy $T$ is assumed to be Lipschitz continuous. This Hamiltonian includes many dissipative systems that are relevant to optimization, including the Bregman Hamiltonian \cite{Wibisono:2016} and conformal Hamiltonian systems \cite{McLachlan:2001}---see
Appendix~%
\ref{generalized_conformal} for a generalization thereof.
Note also that \eqref{gen_ham} generalizes the Caldirola-Kanai Hamiltonian
\cite{Caldirola:1941,Kanai:1948} which
can be seen as the classical limit of the seminal Caldeira-Leggett model \cite{Caldeira:1981}, important
in quantum dissipation and decoherence.

It is possible to characterize the convergence rate that a dissipative system tends to a minimum  through a Lyapunov analysis \cite{Candes:2016,Wibisono:2016,Franca:2018b,Wilson:2016}. This leads to upper bounds of the general form
\be \label{gen_rate}
f(q(t)) - f(q^\star) = \bigo\left(\mathcal{R}(t)\right)
\ee
where $\mathcal{R}(t)$ is a decreasing function of time that depends on the
landscape of $f$.
One of our goals is to construct ``rate-matching'' discretizations, namely  general numerical integrators  able to reproduce \eqref{gen_rate}.
As previously mentioned, we propose a class of discretizations called  presymplectic integrators to this  end---this
is formalized in Definition~\ref{presymp_def} below.
%
%

Let  $x \equiv (q, p)$.
A  numerical integrator for the Hamiltonian system \eqref{ham_motion} is a  map $\flowN_h : \mathbb{R}^{2\n}\to\mathbb{R}^{2\n}$, with step size $h > 0$, such that iterations
\be\label{num_integrator}
x_\m = \flowN_h(x_{\m-1}), \qquad x_0 = x(0),
\ee
approximate the true
state $x(t_\m)\equiv (q(t_\m), p(t_\m))$ at  instants $t_\m = h \m$ ($\m = 1,2,\dotsc$).
Let $\flow_t$ denote the true flow of \eqref{ham_motion}. An  integrator $\flowN_h$ is said to be of
\emph{order} $r \ge 1$ if
\be \label{order}
\| \flowN_h(x) - \flow_h(x) \| = \bigo(h^{r+1}),
\ee
for any $x\in T^*\M$.  We also introduce a Lipschitz assumption for the integrator:
\be \label{lipschitz_flow}
\| \flowN_h(y) - \flowN_h(x) \| \le (1 + h L_\flowN) \| y - x\|
\ee
for some constant $L_\flowN > 0$ and for all $x,y \in T^*\M$.%
\footnote{This  condition is satisfied by a large class of methods, even including simple ones such as the explicit Euler method which does not preserve any dynamical invariant~\cite{Hairer,Reich}.}
We now state one of our main results, which we will further explicate and establish formally in the remainder of the paper.
\begin{theorem} \label{main_theorem}
Consider a dissipative Hamiltonian system \eqref{ham_motion} obtained from \eqref{gen_ham}. Let $\flowN_h$ be a presymplectic integrator of order $r$, assumed to obey the Lipschitz condition \eqref{lipschitz_flow}.  Then $\flowN_h$ preserves the continuous rates of convergence up to a small error, namely
\begin{equation} \label{main_rate}
  \underbrace{f(q_\m) - f(q^\star)}_{\textnormal{discrete rate}} = \underbrace{f(q(t_\m)) - f(q^\star)}_{\textnormal{continuous rate}} + \underbrace{\bigo\big(h^r e^{-\eta_2(t_\m)}\big)}_{\textnormal{small error}},
\end{equation}
provided
$e^{L_{\flowN}t_\m -\eta_1(t_\m)} < \infty$
for sufficiently large $t_\m$.
 This holds for exponentially large times $t_\m \equiv h \m = \bigo(h^r e^r e^{h_0/h})$, where the constant $h_0 > 0$ is independent of $h$.
\end{theorem}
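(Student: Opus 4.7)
The plan is to bootstrap the conservative backward error analysis developed earlier in the paper (Section~\ref{dissip_flow}) into the dissipative setting via the symplectification construction of Section~\ref{ham_sys_sec}, and then convert the resulting Hamiltonian-level error bound into an $f$-level bound using the algebraic form of \eqref{gen_ham}. First I would symplectify by adjoining an auxiliary time coordinate $\tau$ and conjugate momentum $p_\tau$, working with the extended Hamiltonian $\hat{H}(\tau, q, p, p_\tau) = H(\tau, q, p) + p_\tau$ on $T^\star(\M \times \mathbb{R})$: this system is autonomous and conservative, with $\hat{H}$ vanishing along any initial condition lifted by $p_\tau(0) = -H(0, q_0, p_0)$. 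By the presymplectic hypothesis (Definition~\ref{presymp_def}), $\flowN_h$ lifts to a genuine symplectic integrator $\hat{\flowN}_h$ on the extended phase space. Applying the conservative backward error analysis of Section~\ref{dissip_flow} to $\hat{\flowN}_h$ yields a shadow $\tilde{\hat{H}}_h = \hat{H} + \bigo(h^r)$ that is \emph{exactly} preserved by $\hat{\flowN}_h$; by the standard optimal-truncation estimate of Benettin--Giorgilli type, this remains valid for exponentially large times $t_\ell = \bigo(h^r e^r e^{h_0/h})$, which fixes the time window in the statement.

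The exact preservation of $\tilde{\hat{H}}_h$ together with $\tilde{\hat{H}}_h = \hat{H} + \bigo(h^r)$, after tracking the auxiliary pair $(\tau_\ell, p_{\tau,\ell})$ (trivial in $\tau$ and following Hamilton's equations for $p_\tau$), translates into the physical-space estimate
$$
H(t_\ell, q_\ell, p_\ell) - H\bigl(t_\ell, q(t_\ell), p(t_\ell)\bigr) = \bigo(h^r).
$$
I would then invert \eqref{gen_ham} to isolate the objective, writing $f(q) = e^{-\eta_2(t)}\bigl[H(t,q,p) - e^{-\eta_1(t)}\, T(t,q,p)\bigr]$, and evaluate at the discrete and continuous trajectories. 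Subtracting yields two contributions: an $H$-difference term weighted by $e^{-\eta_2(t_\ell)}$ and already of order $\bigo(h^r e^{-\eta_2(t_\ell)})$ by the display above, and a $T$-difference term weighted by $e^{-\eta_1(t_\ell)-\eta_2(t_\ell)}$. For the latter, the Lipschitz assumption on $T$ combined with the standard Gronwall propagation of the local truncation error \eqref{order} through \eqref{lipschitz_flow}, giving $\| x_\ell - x(t_\ell)\| = \bigo(h^r)\, e^{L_\flowN t_\ell}$, yields a contribution of order $e^{-\eta_2(t_\ell)} h^r e^{L_\flowN t_\ell - \eta_1(t_\ell)}$. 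Under the hypothesis $e^{L_\flowN t_\ell - \eta_1(t_\ell)} < \infty$ this is absorbed, and both terms combine into the claimed bound $\bigo(h^r e^{-\eta_2(t_\ell)})$ of \eqref{main_rate}.

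The principal obstacle is ensuring that the backward-error machinery operates uniformly in $t_\ell$ in the presence of the unbounded, explicitly time-dependent coefficients $\eta_1, \eta_2$: the formal shadow series need not converge, so the optimal-truncation argument must be carefully carried over to the symplectified \emph{nonautonomous} system, and one has to track how the growth rates $\dot\eta_1, \dot\eta_2$ enter the remainder estimates. This is precisely what pins down $h_0$ in terms of the analyticity and growth data of $\hat{H}$, and it is the step where the symplectification pays off, since without it there is no conserved quantity to anchor the asymptotic series. A secondary subtlety is the bookkeeping needed to pass from the extended shadow statement back to $H$ itself; this relies on the lift being trivial in the $\tau$ direction and on the dissipation keeping $e^{-\eta_1} T$ under control, and should be routine once the primary uniformity step is in place.
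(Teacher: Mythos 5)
Your proposal is correct and follows essentially the same route as the paper: it reconstructs Theorem~\ref{preserv_ham_decay} (symplectification, conservative backward error analysis giving an exactly preserved shadow Hamiltonian over exponentially long times, then gauge reduction canceling the auxiliary momentum to get $H$ preserved up to $\bigo(h^r)$), and then, exactly as in the paper's proof, isolates $f$ from the form \eqref{gen_ham2}, bounds the kinetic-energy difference via the Lipschitz constant $L_T$ and the global error \eqref{glob_err}, and absorbs the factor $e^{L_\flowN t_\m - \eta_1(t_\m)}$ using the stated hypothesis to obtain \eqref{main_rate}.
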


This theorem shows that presymplectic integration can provide answers to the questions posed earlier
regarding the possibility of rate-matching discretizations of dissipative  systems.
The assumptions of the theorem are mild, and the restriction on $t_\m$ may be irrelevant in practice.
More importantly, the error in \eqref{main_rate} is small and improves with $r$, though it is dominated by $\eta_2$ which  suggests that, in this context, higher-order integrators are not likely to be needed.  Note that choosing a suitable $\eta_2$ is essential since it can make the error negligible; e.g., with $\eta_2 \sim t$ the error is exponentially small.

There is another important aspect of presymplectic integrators worth noting. Since they exactly preserve the phase space geometry, they reproduce the qualitative features of the phase portrait and in particular the stability of critical points.  This is not the case for typical discretizations, which in general introduce spurious damping or excitation.
In short, given any suitable dissipative system obtained from the Hamiltonian \eqref{gen_ham}, presymplectic integrators constitute
a general approach for the construction of optimization algorithms that are guaranteed to respect the stability and rates of convergence of the underlying dynamical system.  We carry out such an approach and consider explicitly the
case of the Bregman Hamiltonian in Section~\ref{bregman_sec}, and also provide general examples in Appendix~%
\ref{constructing_symp}.

\section[Conservative Hamiltonian systems and symplectic integrators]
{Conservative Hamiltonian systems and \\ symplectic integrators}
\label{dissip_flow}

In the remainder of the paper we provide a complete theoretical derivation justifying Theorem~\ref{main_theorem}. To build up to that derivation, we first recall several essential concepts from backward error analysis, symplectic geometry, and dynamical systems. These concepts are necessary for an understanding of how one can draw conclusions about structure-preserving methods without a discrete-time analysis. For further background on the basic differential geometry that we use to develop our ideas, we refer the reader to Appendix~%
\ref{diff_geo_sec}.

\subsection{Numerical integrators and modified equations}

Let $\M$ be an $\n$-dimensional smooth manifold, and let $(\R, x)$ be a chart such that every point $p \in \R\subset \M$ has local coordinates $x^1,\dotsc,x^\n$ in $\mathbb{R}^\n$. From now on we refer to a point $p$ by its coordinates $x$.
Given a vector field $X \in T\M$, where $T \M$ denotes the tangent bundle, 
one has a system of differential equations:
\be \label{autonomous}
\dfrac{d x^j }{dt} = \X^j(x), \qquad x^j(0) = x_0^j . 
\ee
This vector field can be represented by the differential operator
\footnote{We use Einstein summation convention throughout the paper.
}
\be \label{Xop}
X(x) = X^j(x) \partial_j,
\ee
where $\partial_1, \dotsc, \partial_\n$ is the induced coordinate basis in $T\M$. An integral curve of \eqref{autonomous} defines a flow $\flow_t : \M \to \M$. 
Notice that \eqref{autonomous} has unique solutions, at least locally, since $\X$ is locally Lipschitz due to the smoothness of $\M$. The flow can be represented by the exponential map
\be \label{expmap}
\flow_t = e^{t \X} = I + t \X + \tfrac{1}{2} t^2 \X^2 + \dotsm .
\ee
Through its pullback, denoted by $\flow_t^*$, the Lie derivative of a tensor $\alpha$ of rank $(0,q)$ along  $\X$ is defined by
\be \label{LieDer}
(\Lie_\X \alpha)(x)  \equiv \left.\dfrac{d}{dt}\right\vert_{t=0} \! \flow^*_t \alpha(x)
  = \lim_{t\to 0} \dfrac{ \flow_t^* \alpha(\flow_t(x)) - \alpha(x)}{t}.
\ee
The dynamical system is said to \emph{preserve} $\alpha$ if and only if $\Lie_X\alpha = 0$.

Let $\flowN_h: \mathbb{R}^\n \to \mathbb{R}^n$ be a numerical integrator of order $r\ge 1$ for the system \eqref{autonomous}---as defined in \eqref{num_integrator} and \eqref{order}.
Since $X$ is locally Lipschitz, we have $\| X(y) - X(x) \| \le L_X \| y - x \|$ for a constant $L_X > 0$ and for all $x,y \in \M$ in some region of interest. It follows from classical results \cite{Hairer,Reich} that there exists $L_{\flowN} > L_X$ such that we obtain control on a global error:
\be\label{glob_err}
\| \flowN_h^\m(x_0) - \flow_{t_\m}(x_0)\| \le C (e^{L_{\flowN} t_\m } - 1) h^{r},
\ee
for some constant $C > 0$ and initial state $x_0 \in \M$.  Note that we have denoted $\flowN_h^\m \equiv \flowN_h \circ \dotsm \circ \flowN_h$. Thus, for a fixed $t_\m$ the numerical method is accurate up to order $\bigo(h^{r})$. However, if  $t_\m$ is free the error grows exponentially. For this reason, although higher-order methods may provide accurate solutions in a short span of time, they can be inaccurate and unstable for large times.

Formally, every numerical integrator $\flowN_h$ can be seen as the \emph{exact flow} of a modified  or perturbed system \cite{Benettin:1994,Hairer:1994,Hairer:1997,Reich:1999,Hairer,Reich}:
\be  \label{modeqn}
  \dfrac{dx^j}{dt} = \Xt^{j}(x), \qquad \Xt \equiv X +  \Delta X_1 h +  \Delta X_2 h^2 + \dotsm,
\ee
where the $\Delta X$'s are expressed in terms of $X$ and its derivatives.
We refer to $\Xt$ as the perturbed or \emph{shadow vector field}.
In general, the series in \eqref{modeqn} is divergent, and it is necessary to consider a truncation \cite{Hairer:1997}. Following  \cite{Reich:1999}, suppose  we have found a truncation\footnote{Which exists for $\rr=r$ by assumption.}
\be \label{XtildeTrunc1}
\Xt_{\rr} = X +  \Delta X_1 h + \dotsm +  \Delta X_\rr  h^{\rr} \qquad (\rr\ge r)
\ee
such that $\flowN_h$ is an integrator of order $\rr$, namely
$\| \flowN_h(x) - \flow_{h, \Xt_{\rr} } (x) \| = \bigo(h^{\rr+1})$, where $\flow_{h,\Xt_{\rr}}$ denotes the exact flow of  \eqref{modeqn} with $\Xt$ replaced by $\Xt_{\rr}$.  Define
\be \label{DeltaXDef}
\Delta X_{\rr+1}(x) \equiv \lim_{h \to 0} \dfrac{\flowN_h(x) - \flow_{h, \Xt_\rr}(x)}{h^{\rr+1}}.
\ee
Then one can show that
\be \label{XtildeTrunc}
  \Xt_{\rr+1} \equiv \tilde{X}_\rr +  \Delta X_{\rr+1} h^{\rr+1}
\ee
yields a flow for which $\flowN_h$ is  an integrator of order $\rr+1$. 
Proceeding inductively, one can find higher-order vector fields $\tilde{X}_k$, with increasing $k$, such that $\flow_{h, \tilde{X}_k}$ becomes closer and closer to $\flowN_h$.
However,  since  \eqref{modeqn} eventually  diverges, there exists a truncation point $\rr^\star$ such that  $\| \flowN_h(x) - \flow_{h,\Xt_{\rr^\star}}(x)\|$ is as small as possible. 
Finding $\rr^\star$ and estimating the size of the $\Delta X$'s  terms is quite technical, but it has been carried out in seminal work~\cite{Benettin:1994}, and the approach has been further improved in subsequent literature \cite{Hairer:1997,Reich:1999}.

\begin{theorem}[see \cite{Reich:1999}] \label{error_bound}
Assume that $X$ is real analytic and bounded on a compact subset of its domain. Assume that the numerical method $\flowN_h$ is real analytic of order $r$. Then, there exists a family of shadow vector fields $\Xt$ such that
\be \label{boundXtilde}
\big\| X(x) -\Xt(x) \big\| = \bigo( h^r), \qquad
\big\|  \flowN_h(x) - \flow_{h, \Xt}(x) \big\| \le C h e^{-r} e^{- h_0 / h},
\ee
where the constants $C,h_0 > 0$ do not  dependent on the step size $h$.
\end{theorem}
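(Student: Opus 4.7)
The plan is to start from the iterative construction of $\tilde{X}_{\rr}$ in equations~\eqref{XtildeTrunc1}--\eqref{XtildeTrunc}, and to optimize the truncation index $\rr=\rr^\star(h)$ so that the residual $\|\flowN_h(x)-\flow_{h,\tilde{X}_{\rr^\star}}(x)\|$ becomes exponentially small in $1/h$. The first estimate $\|X-\tilde{X}\|=\bigo(h^r)$ comes essentially for free: since $\flowN_h$ has order $r$, the iteration can be initialized with $\tilde{X}_r=X$, so that the first genuine correction $\Delta X_{r+1}$ enters $\tilde{X}$ only at order $h^{r+1}$, and in particular $\|\tilde{X}-X\|=\bigo(h^r)$.

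The heart of the proof is a quantitative bound on the growth of $\|\Delta X_\rr\|$ with $\rr$. By analyticity, $X$ extends holomorphically to a complex neighborhood $\{x:\mathrm{dist}(x,K)<R\}$ of a compact $K\subset\M$ and is bounded there by some $M>0$; likewise $\flowN_h$ is jointly analytic in $(h,x)$. The next step is to express $\Delta X_{\rr+1}$ through~\eqref{DeltaXDef} in terms of $X$, the lower corrections, and their derivatives (via a Baker--Campbell--Hausdorff expansion of the flow composition that is implicit in the difference $\flowN_h-\flow_{h,\tilde{X}_\rr}$), and to apply Cauchy's formula on a nested family of shrinking compacta $K_1\supset K_2\supset\dotsm\supset K$. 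One establishes by induction the factorial bound
\be
  \|\Delta X_{\rr}\|_{K_{\rr}} \le A\, \rr!\, \rho^{-\rr},
\ee
where $A,\rho>0$ depend only on $M$ and on $R$. The factorial loss comes from repeated differentiation via Cauchy's formula, while admissibility of the iteration is maintained by choosing a geometric or polynomially-decaying sequence of radii $R_\rr$ whose intersection still dominates~$K$.

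Once this factorial bound is available, the difference between consecutive truncations combined with a Gronwall-type estimate on the flow difference yields
\be
  \|\flowN_h(x)-\flow_{h,\tilde{X}_{\rr}}(x)\| \le B\, h\, \rr!\, (h/\rho)^{\rr},
\ee
for some $B>0$. Stirling shows the right-hand side is minimized at $\rr^\star\sim\rho/h$, where it equals $e^{-h_0/h}$ up to polynomial prefactors in $1/h$, for a constant $h_0>0$ depending only on $\rho$. Absorbing polynomial factors and the shift of the optimal window induced by the fact that the sum starts at $\rr=r$ into new constants produces the extra multiplicative factor $e^{-r}$ and gives the claimed bound $\|\flowN_h(x)-\flow_{h,\tilde{X}}(x)\|\le C h e^{-r} e^{-h_0/h}$ with $\tilde{X}\equiv\tilde{X}_{\rr^\star(h)}$.

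The main obstacle is the inductive Cauchy estimate on $\|\Delta X_\rr\|$: one must simultaneously express $\Delta X_{\rr+1}$ explicitly in terms of $X$ and the lower-order corrections through the composition-of-flows expansion hidden in~\eqref{DeltaXDef}, and control the shrinkage of the analyticity domain so that the factorial bound persists on a set still containing $K$. Everything else---the initial order bound, the Stirling-based optimization over $\rr$, and the final absorption of the polynomial factors---is routine bookkeeping once the factorial estimate is in hand.
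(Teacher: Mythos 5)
The paper itself offers no proof of Theorem~\ref{error_bound}: it is imported from the backward error analysis literature, with the technical work attributed to \cite{Benettin:1994,Hairer:1997,Reich:1999}. Your outline follows precisely the strategy of those references---recursive construction of $\Xt_\rr$ via \eqref{DeltaXDef}--\eqref{XtildeTrunc}, Cauchy estimates on nested complex neighborhoods yielding factorial-type growth of the corrections, and optimal truncation at $\rr^\star\sim \rho/h$---so there is no methodological divergence to report, only the question of whether your sketch actually constitutes a proof.

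It does not, and the gap is the one you name yourself: the inductive estimate $\|\Delta X_{\rr}\|_{K_\rr}\le A\,\rr!\,\rho^{-\rr}$ is asserted, not established, and it \emph{is} the theorem---everything after it (Stirling, absorption of polynomial prefactors into a slightly smaller $h_0$, the factor $h$ from summing one-step defects) is indeed routine. To close it you would need to (i) express the defect $\flowN_h-\flow_{h,\Xt_\rr}$ as an explicit analytic function of $h$ whose Taylor coefficients are controlled by the previously estimated $\Delta X_1,\dotsc,\Delta X_\rr$; the limit in \eqref{DeltaXDef} must be shown to exist and to define an analytic field on the smaller domain, which requires an explicit recursion (or majorant-series argument) for the coefficients of both the exact flow of $\Xt_\rr$ and the method, not merely an appeal to a ``BCH expansion implicit in the difference''; (ii) schedule the shrinking radii $R_\rr$ so that after $\bigo(1/h)$ induction steps a fixed neighborhood of $K$ survives \emph{with constants $A,\rho$ uniform in $\rr$}---this is where the published proofs spend most of their effort, and careless scheduling degrades $\rr!\,\rho^{-\rr}$ to something like $(c\rr)^{2\rr}$, which changes $h_0$ and can destroy the clean form of \eqref{boundXtilde}; and (iii) derive the $e^{-r}$ factor from the explicit optimization with these constants, since for a method of order $r$ the corrections below order $r$ vanish and the first surviving terms each contribute a factor of size roughly $e^{-1}$ at $\rr=\rr^\star$---your appeal to ``a shift of the optimal window'' gestures at this but does not produce it. As it stands, the proposal is a faithful roadmap of the proof in \cite{Benettin:1994,Reich:1999}, not an independent derivation of the bound.
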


This result holds in full generality, for any \emph{autonomous} dynamical system and any numerical integrator.
Next we will discuss how it becomes particularly useful in the case of conservative Hamiltonian systems.

\subsection{Conservative Hamiltonian systems} \label{ham_cons}

Before talking about symplectic integrators, we need to introduce conservative Hamiltonian systems.
Hamiltonian systems are ubiquitous because they are naturally attached to the geometry of the cotangent bundle $T^*\M$ of any differentiable manifold $\M$~\cite{Berndt,Aebischer}. We provide a concise introduction to key results that will be necessary later.

\begin{definition} \label{symp_mani} Let $\M$ be an even-dimensional smooth manifold supplied with a closed nondegenerate 2-form $\omega$. More precisely:
\begin{enumerate}
\item $d\omega = 0$.
\item On any tangent space $T_x\M$, if $\omega(X, Y) = 0$ for all $Y \ne 0 \in T_x\M$, then $X = 0$.
\end{enumerate}
Then $\omega$ is called a \emph{symplectic structure} and $(\M, \omega)$ a \emph{symplectic manifold}.
\end{definition}

\begin{theorem} \label{cotangent_symplectic}
Let $q^1,\dotsc,q^\n$ be local coordinates of $\M$ and
 $q^1,\dotsc,q^\n,p_1,\dotsc,p_\n$ the corresponding induced coordinates of the cotangent bundle $T^*\M$.
Then $T^*\M$ admits a closed  nondegenerate symplectic structure,
\be \label{symp}
\omega = dp_j \wedge dq^j,
\ee
and is therefore a symplectic manifold.
\end{theorem}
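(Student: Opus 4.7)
The plan is to construct $\omega$ as the exterior derivative of a canonically defined global 1-form on $T^*\M$, which automatically yields closedness, and then verify nondegeneracy by a pointwise coordinate computation.

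First I would define the tautological (Liouville) 1-form $\theta$ on $T^*\M$ in a coordinate-free way, so as to avoid any worry about chart dependence. Let $\pi : T^*\M \to \M$ denote the natural projection. For a point $\alpha \in T^*_q\M \subset T^*\M$ and a tangent vector $v \in T_\alpha(T^*\M)$, set
\[
\theta_\alpha(v) \equiv \alpha\bigl((d\pi)_\alpha(v)\bigr).
\]
This is intrinsically defined and therefore transforms correctly under changes of chart on $\M$. A short computation in induced local coordinates $(q^j, p_j)$ on $T^*\M$, using the fact that $(d\pi)_\alpha$ sends $a^j \partial_{q^j} + b_j \partial_{p_j}$ to $a^j \partial_{q^j}$ and that $\alpha$ at this point is the covector with components $p_j$, shows that
\[
\theta = p_j\, dq^j
\]
in any induced chart. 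Defining $\omega \equiv d\theta$ then recovers the stated expression $\omega = dp_j \wedge dq^j$ globally.

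Closedness is immediate: $d\omega = d^2 \theta = 0$. For nondegeneracy, it suffices to check the condition pointwise in one induced chart around each point of $T^*\M$. Decomposing arbitrary vectors as $X = a^j \partial_{q^j} + b_j \partial_{p_j}$ and $Y = c^k \partial_{q^k} + d_k \partial_{p_k}$ at a fixed point, a direct substitution into $\omega = dp_j \wedge dq^j$ gives
\[
\omega(X,Y) = b_j\, c^j - a^j\, d_j.
\]
If this vanishes for every choice of $(c^k, d_k)$, then taking $Y$ purely in the $q$-directions forces $b_j = 0$ for all $j$, while taking $Y$ purely in the $p$-directions forces $a^j = 0$ for all $j$; hence $X = 0$.

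The only subtle point is global well-definedness of $\theta$, since one wants $\omega$ to be a genuine 2-form on $T^*\M$ rather than a chart-dependent object; this is precisely what the coordinate-free definition via $\pi$ accomplishes, bypassing any explicit computation with the transition functions between induced charts. Once this is granted, both conditions of Definition~\ref{symp_mani} are verified and $(T^*\M,\omega)$ is a symplectic manifold.
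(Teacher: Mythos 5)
Your proof is correct and follows essentially the same route as the paper: both construct $\omega$ as the exterior derivative of the Liouville--Poincar\'e 1-form (so that closedness is automatic from $d^2=0$) and then verify nondegeneracy pointwise in induced coordinates, the only cosmetic difference being that the paper checks $\det(\omega)\ne 0$ via a matrix representation while you plug in arbitrary vectors directly. Your explicit intrinsic definition $\theta_\alpha(v) = \alpha((d\pi)_\alpha(v))$ nicely justifies the global well-definedness that the paper simply asserts with a citation.
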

\begin{proof}
There always exists a globally defined Liouville-Poincar\' e 1-form,
$\lambda \equiv p_j dq^j \in T^*\M$ \cite{Berndt}.
Applying the exterior derivative  induces the Poincar\' e 2-form
$\omega \equiv d \lambda = dp_j \wedge dq^j$.
Since $d^2 = 0 $ (see \ref{poinc_lemma}) we
trivially have $d\omega = 0$. It is also easy to see that $\omega$ is nondegenerate; e.g., in a matrix representation $\omega = \left( \begin{smallmatrix} 0 & -I \\ +I & 0 \end{smallmatrix} \right)$, thus $\det ( \omega )= 1 \ne 0$.
\end{proof}

\begin{theorem}\label{Ham_omega}
A dynamical system with  phase space $T^*\M$ preserves the symplectic structure \eqref{symp} if and only if it is (locally) a conservative Hamiltonian system, namely has the form in~\eqref{ham_motion} with a time-independent Hamiltonian $H = H(q,p)$.
\end{theorem}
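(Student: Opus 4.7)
The plan is to reduce preservation of the symplectic form to a closedness condition via Cartan's magic formula, and then use the Poincar\'e lemma to produce a local Hamiltonian. Let $X$ be the autonomous vector field on $T^*\M$ whose flow is the dynamical system. By \eqref{LieDer}, the system preserves $\omega$ if and only if $\Lie_X \omega = 0$. Cartan's magic formula gives
\be
\Lie_X \omega = d(i_X \omega) + i_X(d\omega),
\ee
and since $d\omega = 0$ by Theorem~\ref{cotangent_symplectic}, the preservation condition collapses to the assertion that the 1-form $i_X\omega$ is closed. The Poincar\'e lemma then furnishes, on any contractible chart, a smooth function $H$ such that $i_X\omega = -dH$.

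To identify this with Hamilton's equations, I would work in the natural cotangent coordinates. Writing $X = A^j\,\partial_{q^j} + B_j\,\partial_{p_j}$ and contracting against $\omega = dp_j\wedge dq^j$ yields $i_X\omega = B_j\, dq^j - A^j\, dp_j$. Matching coefficients with $-dH = -(\partial H/\partial q^j)\,dq^j - (\partial H/\partial p_j)\,dp_j$ gives $A^j = \partial H/\partial p_j$ and $B_j = -\partial H/\partial q^j$, which is precisely \eqref{ham_motion}. Since $\omega$ and $X$ carry no explicit time dependence, neither does $i_X\omega$, and the local primitive $H$ can be chosen $t$-independent. Conversely, if $X$ is generated by a time-independent Hamiltonian $H$, then $i_X\omega = -dH$ is exact, hence closed, so $\Lie_X\omega = 0$ and $\omega$ is preserved.

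The main subtlety, rather than a genuine obstacle, is the qualifier \emph{locally} in the statement: closedness of $i_X\omega$ only guarantees a primitive $H$ on a contractible neighborhood, and gluing to a global Hamiltonian may be obstructed by a nontrivial first de Rham cohomology of $T^*\M$. The theorem already anticipates this by wording the equivalence locally, so no additional work is needed here. The only remaining routine points are (i) checking that $i_X\omega$ inherits autonomy from $X$ and $\omega$, so that the local primitive can be chosen autonomously, and (ii) noting that the primitive $H$ is unique on each chart up to an additive constant, which does not affect Hamilton's equations.
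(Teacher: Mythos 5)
Your proof is correct and follows essentially the same route as the paper: Cartan's magic formula plus $d\omega=0$ reduces $\Lie_X\omega=0$ to closedness of $i_X\omega$, the Poincar\'e lemma supplies a local primitive $H$ with $i_X\omega=-dH$, and the coordinate contraction of $\omega=dp_j\wedge dq^j$ identifies this with Hamilton's equations, with the converse following from exactness of $dH$. Your added remarks on locality and autonomy of the primitive are sound refinements of the same argument.
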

\begin{proof}
Let $\gamma : \mathbb{R} \to \M$ be a curve parametrized by $t$, so that $q^j(t) \equiv (q^j \circ \gamma)(t)$ and $p_j(t) \equiv (p_j \circ \gamma)(t)$ are time-dependent coordinates over $T^*\M$.
Consider the tangent vector $X \equiv \tfrac{d}{dt}$ to this curve, which in a coordinate basis is given by
\be
X = \dfrac{dq^j}{dt} \dfrac{\partial}{\partial q^j} +
\dfrac{dp_j}{dt} \dfrac{\partial }{\partial p_j} .
\ee
Note that $X$ lives in the tangent bundle of the phase space $T^*\M$.
The vector field $X$ preserves \eqref{symp} if and only if
$\Lie_{X} \omega = 0$.
Recalling Cartan's magic formula,
\be\label{cartan}
\Lie_X = d\circ i_X + i_X \circ d,
\ee
we conclude that $(d\circ i_X)(\omega) = 0$. The Poincar\' e lemma thus implies the existence of a (local) Hamiltonian function $H : T^*\M \to \mathbb{R}$, such that
\be \label{ham1}
  i_X(\omega) = - dH.
\ee
This is actually Hamilton's equations \eqref{ham_motion} in disguise; indeed, in component form we have
$i_X(\omega) =  \dot{p}_j dq^j - \dot{q}^j dp_j$ which by comparison with \eqref{ham1} yields
\eqref{ham_motion}.
We have just shown that a vector field that preserves the symplectic form \eqref{symp} genererates Hamiltonian dynamics.
It is now easy to show the converse, namely that the flow of a Hamiltonian system preserves the 2-form \eqref{symp}.  Given a Hamiltonian $H$, we have the equations of motion \eqref{ham_motion} with an associated vector field $X_H$. These equations can equivalently be written in the form \eqref{ham1}, as already shown. Using the identities $d^2H = 0$ and $d\omega = 0$ in \eqref{cartan} implies  $\Lie_{X_H} \omega = 0$.
\end{proof}


Finally, another fundamental property of Hamiltonian systems is energy conservation, which follows immediately  from the equations of motion \eqref{ham_motion}:
\be \label{Hconst}
\dfrac{dH}{dt} =
0 .
\ee
Thus, any conservative Hamiltonian system have two fundamental properties:
its flow preserve the symplectic structure, and
the Hamiltonian is a constant of motion.

\subsection{Symplectic integrators} \label{symp_sec}

Consider the following class of numerical methods.

\begin{definition} \label{symp_int_def}
A numerical integrator $\flowN_h$ for a conservative Hamiltonian system is a \emph{symplectic integrator}
if it preserves \eqref{symp}, i.e., $\flowN_h^* \circ \omega \circ \flowN_h = \omega$ where
$\flowN_h^*$ is the pullback.
\end{definition}

It is now easy to see that the flow of the perturbed system \eqref{modeqn} associated to a symplectic integrator
also preserves $\omega$ exactly if it is Hamiltonian. As a consequence, Theorem~\ref{Ham_omega} implies that the perturbed  system must be Hamiltonian, i.e.,
there exists a shadow Hamiltonian $\tilde{H}$ which is a perturbed version of $H$.
Indeed, since $\flowN_h$ preserves $\omega$ by assumption, and the flow of the perturbed system \eqref{modeqn} is an exact description of $\flowN_h$, the
shadow vector field $\Xt$ obeys $\Lie_{\Xt} \omega = 0$. Replacing the expansion \eqref{modeqn} and using the linearity of the Lie derivative in the vector field implies $\Lie_{\Delta X_j} \omega = 0$ for each $j = 1,2,\dotsc$.
Hence, Theorem~\ref{Ham_omega} implies that not only $\Xt$ but all $\Delta X_j$'s are Hamiltonian vector fields.  That is,
by the same argument leading to \eqref{ham1}, there exists a function $\tilde{H}$ and functions $H_j$'s such that
\be
i_{\Xt}(\omega) = - d\tilde{H}, \qquad  i_{\Delta X_j}(\omega) = - d H_j. 
\ee
Using the series \eqref{modeqn}, combining these two equations, and using \eqref{ham1}, we have that
\be
\begin{split}
-d \tilde{H} &= i_{X}(\omega) + \sum_j h^j i_{\Delta X_j}(\omega) = - dH - \sum_j h^j d H_j,
\end{split}
\ee
or equivalently
\be \label{shadow_ham}
\tilde{H} = H + h H_1 + h^2 H_2 + \dotsm .
\ee
Moreover, the shadow Hamiltonian $\tilde{H}$ is exactly conserved by the perturbed system, or equivalently
by the symplectic integrator: $\tilde{H} \circ \flowN_h  = \tilde{H}$. If we now consider a truncation~\eqref{XtildeTrunc}, or equivalently if we truncate \eqref{shadow_ham}, then the flow $\flow_{h, \Xt_k}$ exactly conserves
the truncated shadow Hamiltonian $\tilde{H}_k$:
\be \label{ham_k_flow}
\tilde{H}_k \circ \flow_{h, \Xt_k} = \tilde{H}_k.
\ee
We are now ready to state the most important property of symplectic integrators.
Adapting ideas from \cite{Benettin:1994} we can prove the following.
\begin{theorem}\label{symp_ham}
Let $\flowN_h$ be a symplectic integrator  of order $r$. Assume that the Hamiltonian $H$
is Lipschitz. Then $\flowN_h$ conserves $H$  up to
\be \label{HamError}
H \circ \flowN_h^\m = H\circ \flow_{\m h}  + \bigo(h^r),
\ee
for exponentially large times $t_\m = \bigo(h^r e^r e^{h_0/h})$.
Recall that $\flow_{\m h}$ is the true flow  
and $H = H\circ \flow_{\m h}$ is conserved, thus
$H(q_\ell, p_\ell) = H(q_0, p_0) + \bigo(h^r)$.
\end{theorem}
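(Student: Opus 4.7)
The plan is to leverage the shadow Hamiltonian $\tilde H$ of \eqref{shadow_ham}, which is exactly preserved by the modified flow $\flow_{h,\Xt}$ as in \eqref{ham_k_flow}, together with the exponentially sharp one-step estimate of Theorem~\ref{error_bound}. Because $\flowN_h$ has order $r$, the modified equation \eqref{modeqn} has $\Delta X_j = 0$ for $j<r$, so all $H_j$'s with $j<r$ in \eqref{shadow_ham} vanish; consequently $\tilde H = H + h^r H_r + h^{r+1} H_{r+1} + \dotsm$ and $|\tilde H - H| = \bigo(h^r)$ uniformly on any compact subset of $T^*\M$.

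First I would introduce the telescoping decomposition
\be
H\circ \flowN_h^\m(x_0) - H(x_0) = (H - \tilde H)\circ \flowN_h^\m(x_0) + (\tilde H - H)(x_0) + \big[\tilde H\circ \flowN_h^\m(x_0) - \tilde H(x_0)\big].
\ee
By the previous paragraph the first two pieces contribute $\bigo(h^r)$. For the third, telescope once more over iterates $x_i \equiv \flowN_h^i(x_0)$:
\be
\tilde H\circ \flowN_h^\m(x_0) - \tilde H(x_0) = \sum_{i=0}^{\m-1} \big[\tilde H\circ \flowN_h(x_i) - \tilde H(x_i)\big].
\ee
Using exact conservation of $\tilde H$ along the modified flow---namely $\tilde H\circ \flow_{h,\Xt}(x_i) = \tilde H(x_i)$---each summand equals $\tilde H\circ \flowN_h(x_i) - \tilde H\circ \flow_{h,\Xt}(x_i)$ and is therefore bounded, via Lipschitz continuity of $\tilde H$ and the second inequality of Theorem~\ref{error_bound}, by $L_{\tilde H}\, C\, h\, e^{-r}\, e^{-h_0/h}$. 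Summing over the $\m = t_\m/h$ steps yields $\bigo(t_\m\, e^{-r}\, e^{-h_0/h})$, which is $\bigo(h^r)$ precisely when $t_\m = \bigo(h^r e^r e^{h_0/h})$---the window claimed in the statement. Since $H\circ \flow_{\m h}(x_0) = H(x_0)$ by \eqref{Hconst}, this establishes \eqref{HamError}.

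The main obstacle, and the reason this is a statement about \emph{symplectic} integrators specifically, is the existence of the conserved shadow Hamiltonian $\tilde H$ itself: without the symplectic structure one would only have an abstract shadow vector field $\tilde X$, and $H$ could drift linearly in $\m$ at rate $\bigo(h^r)$ per step, producing a useless $\bigo(t_\m h^{r-1})$ bound. A secondary technical issue is ensuring that the iterates $x_i$ remain in a compact set on which Theorem~\ref{error_bound} and the Lipschitz estimate for $\tilde H$ are valid; this is handled by a bootstrap using near-conservation of $\tilde H$, which keeps the orbit in a neighborhood of the initial energy shell throughout the stated exponential time window.
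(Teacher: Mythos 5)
Your proposal is correct and follows essentially the same route as the paper's proof: telescoping the drift of the conserved truncated shadow Hamiltonian $\tilde H$ over the iterates, bounding each one-step increment by the Lipschitz constant of $\tilde H$ times the exponentially small one-step error of Theorem~\ref{error_bound}, and then converting $\tilde H$ back to $H$ via $\tilde H = H + \bigo(h^r)$. The only additions beyond the paper's argument are your explicit justification that the first $r-1$ correction terms $H_j$ vanish and the remark on keeping iterates in a compact set, both of which are sensible refinements rather than a different approach.
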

\begin{proof}
Let $\Xt \equiv \Xt_k$ be a truncated shadow vector field associated to $\flowN_h$---in what follows we omit  $k$ for simplicity.
We already know that \eqref{ham_k_flow} holds true, hence
\be \label{sum1}
\tilde{H}\circ \flowN_h^\m - \tilde{H}
= \sum_{i=0}^{\m-1} \left( \tilde{H}\circ\flowN_h^{i+1} - \tilde{H}\circ \flowN_h^{i} \right)
= \sum_{i=0}^{\m-1} \left( \tilde{H}\circ\flowN_h - \tilde{H}\circ \flow_{h,\tilde{X}}\right)\circ \flowN_h^{i},
\ee
where the first equality is an identity.
Let $\tilde{L}$ be the Lipschitz constant of $\tilde{H}$. Using the second relation of \eqref{boundXtilde} we find  that
\be \label{QExpSmall}
\big| \big(\tilde{H}\circ \flowN_h^\m - \tilde{H} \big)(x_0) \big|  \le
\tilde{L} \sum_{i=0}^{\m-1} \| \flowN_{h}(x_{i}) - \flow_{h,\Xt}(x_{i}) \|
\le \tilde{L} C  \m h \, e^{-r} e^{-h_0/h},
\ee
so $\flowN_h$ preserves  $\tilde{H}$ up to an exponentially small error in $h^{-1}$. In order to approximate $H$, write \eqref{shadow_ham} in the form $\tilde{H} = H + \bigo(h^r)$---since $\flowN_h$ has order $r$---to obtain \eqref{HamError}. Note that to ensure  the contribution from \eqref{QExpSmall} remains smaller than $\bigo(h^r)$ we have to choose
$t_\m = h \m$ such that $t_\m \, e^{-r} e^{-h_0/h} \sim h^r$.
\end{proof}

Theorem~\ref{symp_ham} explains the benefits of symplectic integrators.  Besides preserving the symplectic structure of the system  exactly, such methods generate solutions that remain within $\bigo(h^r)$ of the true energy surface of the system and thus exhibit long-term stability. It is well-known that in practice symplectic integrators tend to outperform alternative approaches when simulating conservative Hamiltonian systems \cite{SanzSerna:1992,McLachlan:2006,Quispel:2018}.

The situation is quite different for nonconservative or dissipative
Hamiltonian systems.  Even if one applies a symplectic integrator to the system written in  the extended phase space, 
\emph{the Hamiltonian is no longer a conserved quantity}. This conservation law is the most basic assumption underneath Theorem~\ref{symp_ham}; if \eqref{Hconst} is no longer true, then \eqref{ham_k_flow} is no longer true, and the argument leading to \eqref{sum1} and \eqref{QExpSmall} breaks down.
Therefore, it is not guaranteed that applying a symplectic integrator to a nonconservative Hamiltonian system will closely reproduce the Hamiltonian, which is varying over time, nor is it guaranteed that the method will exhibit long-term stability. One of the main contributions of the current paper is to show that Theorem~\ref{symp_ham} can be extended to general nonconservative Hamiltonian systems 
despite the nonexistence of such a conservation law.

\section{Nonconservative Hamiltonian systems} \label{ham_sys_sec}

Consider a time-dependent Hamiltonian $H = H(t,q,p)$.\footnote{As probably evident by now, we often refer to dissipative Hamiltonian systems although the reader should keep in mind that everything we say actually holds for any explicitly
time-dependent or nonconservative Hamiltonian system.}
The evolution of the system is still governed by  Hamilton's equations \eqref{ham_auto}, however the energy conservation law \eqref{Hconst} no longer holds and is replaced by
\be \label{Hnonconst}
\dfrac{dH}{dt} =
\dfrac{\partial H}{\partial t} .
\ee
We begin by introducing a $(2\n+1)$-dimensional extended phase space $T^*\Me$  in which time becomes a new coordinate, so that  $q^0=t, q^1, \dotsc, q^\n, p_1, \dotsc p_\n$ are local coordinates of $T^* \Me$. The evolution of the system is now generated by the vector field
\be \label{HamVec}
X \equiv X^\mu \partial_\mu = \dfrac{\partial}{\partial q^0} +
\dfrac{dq^j}{ds} \dfrac{\partial}{\partial q^j} +
\dfrac{dp_j}{ds} \dfrac{\partial}{\partial p_j},
\ee
where $\mu = 0,1,\dotsc,\n$, $X^0 = 1$,  and $s$ denotes  the ``new time parameter.'' Thus, the \emph{nonautonomous} Hamiltonian system \eqref{ham_motion}   is equivalent to the autonomous system
\be \label{ham_auto}
\dfrac{dq^0}{ds} = 1, \qquad \dfrac{dq^j}{ds} = \dfrac{\partial H}{\partial p_j}, \qquad
\dfrac{dp_j}{ds} = - \dfrac{\partial H}{\partial q^j},
\ee
over $T^*\Me$, where $H = H(q^0,\dotsc,q^n,p_1,\dotsc,p_n)$ is independent of $s$.
Importantly,  $T^*\Me$ is odd-dimensional so this phase space is no longer a symplectic manifold.

We thus introduce another dimension by adding a conjugate momentum $p_0$ that pairs with $q^0=t$. To make the distinction clear, we denote the $(\n+1)$-dimensional configuration manifold by $\Mee \equiv \mathbb{R}\times \M$, which has local coordinates $q^0,\dotsc,q^\n$.  The associated cotangent bundle $T^*\Mee$ is now of even dimensionality ${2\n+2}$, with  coordinates $q^0,\dotsc,q^\n,p_0,\dotsc,p_n$. Therefore,  $T^*\Mee$ has a natural  Liouville-Poincar\'e \mbox{1-form}
$  \Lambda = p_\mu dq^\mu$
which, as in the proof of Theorem~\ref{cotangent_symplectic}, induces a closed nondegenerate Poincar\' e 2-form:
\be \label{poince2}
\Omega \equiv d\Lambda = dp_\mu \wedge dq^\mu .
\ee
This makes $T^*\Mee$ a proper \emph{symplectic manifold}.
Requiring that the vector field
\be \label{newVec}
Y \equiv Y^\mu \partial_\mu = \dfrac{dq^\mu}{ds}\dfrac{\partial}{\partial q^\mu} + \dfrac{dp_\mu}{ds}\dfrac{\partial}{\partial p_\mu}
\ee
preserves the symplectic structure \eqref{poince2} implies---by the argument leading to \eqref{ham1}---that
\be \label{ham_not_yet}
  i_Y \Omega = -d\Hc
\ee
for some Hamiltonian  $\Hc: T^*\Mee \to \mathbb{R}$. In components, we have the following Hamilton's equations:
\be \label{ham_big}
  \dfrac{dq^0}{ds} = \dfrac{\partial \Hc}{\partial p_0}, \qquad
  \dfrac{dp_0}{ds} = -\dfrac{\partial \Hc}{\partial q^0}, \qquad
\dfrac{dq^j}{ds} = \dfrac{\partial \Hc}{\partial p_j}, \qquad
\dfrac{dp_j}{ds} = -\dfrac{\partial \Hc}{\partial q^j},
\ee
where $j=1,\dotsc,n$ are spatial indexes. Moreover, since now $\Hc$ does not depend explicitly on time $s$, the above system is actually conservative:
\be \label{cons_ext}
\dfrac{d \Hc}{ds} = 0.
\ee

At this stage, the higher-dimensional system \eqref{ham_big} is not equivalent to \eqref{ham_auto}.
In order to create this equivalence we need to impose constraints, which means \emph{fixing a gauge}.
Note that the symplectic form \eqref{poince2} is $\Omega = dp_0\wedge dq^0 + \omega$, where $\omega$ is the symplectic form \eqref{symp} for the original phase space $T^*\M$.
Requiring the vector fields \eqref{newVec} and \eqref{HamVec} to be the same, and using \eqref{ham_not_yet} together with \eqref{ham1}, yields
\be
0 = i_{Y} \Omega + d \Hc = i_{X} dp_0 \wedge dq^0 +  i_{X} \omega +  d\Hc = d( -p_0 - H + \Hc).
\ee
Thus, up to an irrelevant constant, we must have
\be \label{surf}
\Hc(q^0,\dotsc,q^\n,p_0,\dotsc,p_\n) = p_0 + H(q^0,\dotsc,q^\n,p_1,\dotsc,p_\n).
\ee
Note that we carefully made the variable dependencies of each term explicit.
This equation defines an \emph{embedded submanifold} in the  symplectic manifold $T^* \Mee$.
Hence,
the dynamics of the nonconservative Hamiltonian system \eqref{HamVec} lies on a hypersurface of constant energy $\Hc$. 
More precisely, from \eqref{surf} the first equation in \eqref{ham_big} gives
\be \label{q0}
\dfrac{dq^0}{ds} = 1,
\ee
which together with the two last equations of \eqref{ham_big} becomes precisely the original  system \eqref{ham_auto} over the extended phase space $T^*\Me$. The second equation of \eqref{ham_big} reproduces the dissipation
given by \eqref{Hnonconst}:
\be \label{fourth}
\dfrac{dp_0}{ds} = - \dfrac{\partial H}{\partial q^0} = -\dfrac{dH}{ds},
\ee
where the second equality follows from \eqref{cons_ext} together with \eqref{surf}.
Hence, up to another irrelevant constant, we have
\be \label{p0}
  p_0(s) = - H(s),
\ee
with $H(s) = H(q(s), p(s))$. Note that in \eqref{p0} the Hamiltonian is solely a function of time---the actual trajectories have been replaced---therefore $p_0$ is completely fixed.

We remark that \eqref{q0} and \eqref{p0} are specific choices of coordinates on $T^*\Mee$ which remove the spurious degrees of freedom that are not present in the original system.   This procedure of embedding the phase space of the nonconservative Hamiltonian system \eqref{ham_auto}, which does not admit a symplectic structure, into a higher-dimensional symplectic manifold is called \emph{symplectification}~\cite{Berndt,Aebischer}. We provide
an illustration in Fig.~\ref{submanifold}.

Let us comment on another point behind the gauge choice \eqref{q0} and \eqref{p0}.
On the extended phase space $T^* \Me$ it is still possible to distinguish between position $q^0$ and momentum $p_0$. The flow $\flow_s = e^{s X}$ defines orbits $s \mapsto q^\mu(s)$ on $\Mee$. Such  orbits are considered equivalent by time reparametrization, $s\mapsto s'(s)$. 
However, to match the original orbits of the  time-dependent Hamiltonian system over $\M$, one must fix $s = q^0 = t$. This means \emph{fixing a reference frame} on $T^*\Me$.
For this reason, time-dependent Hamiltonian systems are not covariant and one \emph{is not free to reparametrize the original time $t$}.
In other words, since the Hamiltonian system is explicitly time-dependent, time transformations are not canonical.

\begin{figure}
\centering
\includegraphics[scale=.27]{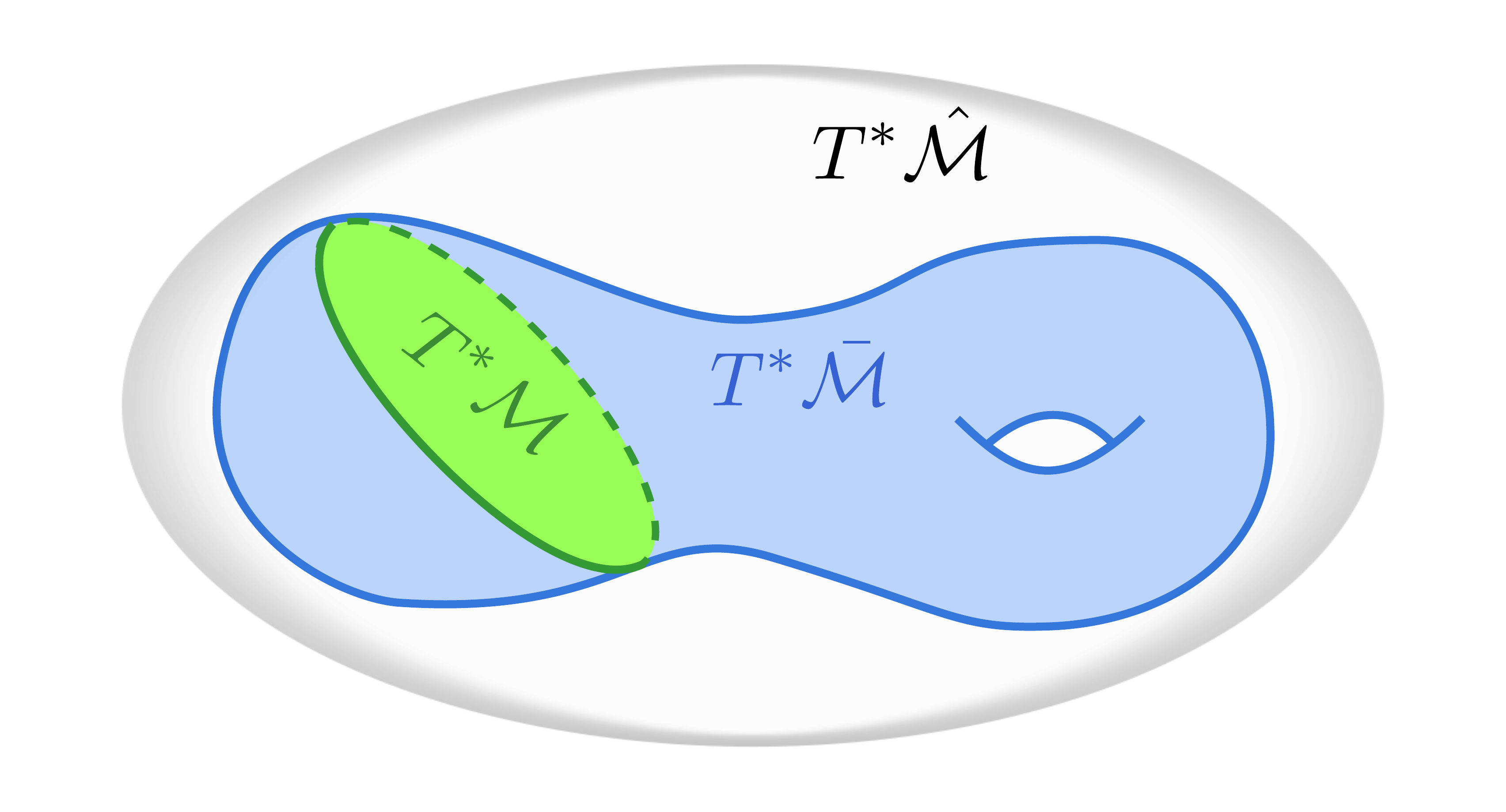}
\caption{\label{submanifold}
The symplectification consists of the embeddings $T^*\M \hookrightarrow T^*\bar{\mathcal{M}} \hookrightarrow T^*\hat{\mathcal{M}}$,  each of dimension $2\n$, $2\n+1$, and $2\n+2$, respectively.
The original dissipative (or nonconservative) Hamiltonian system has phase space $T^*\M$. When the system is written in autonomous form it has phase space $T^*\bar{\mathcal{M}}$, which is a presymplectic manifold that can naturally be embedded as a submanifold of constant energy $\Hc = 0$ in the symplectic manifold $T^*\hat{\mathcal{M}}$,
which can be associated to the phase space of a conservative Hamiltonian system.
}
\end{figure}

\subsection{Presymplectic manifolds}  \label{presymppp}

We are now in a position to delineate the specific geometry underlying nonconservative Hamiltonian systems.
First, we introduce a useful generalization of symplectic manifolds~\cite{Berndt}.

\begin{definition}\label{presymp_def} Let $\M$ be a differentiable manifold of dimension $(2\n + \bar{\n})$, $\bar{\n} \ge 0$, supplied with a 2-form $\omega$ of rank $2\n$ everywhere. Then $\omega$ is called a \emph{presymplectic form} and
  $(\M, \omega)$ a \emph{presymplectic manifold}.
\end{definition}

When $\bar{\n}=0$ this definition reduces to that of a symplectic manifold, and when $\bar{\n}=1$ it reduces to the definition of a \emph{weak contact manifold}.  A weak contact manifold supplied with a 1-form $\vartheta$ obeying $\vartheta\wedge (d\vartheta)^\n \ne 0$ is a \emph{contact manifold} \cite{Berndt,Aebischer}. There is an alternative theory of contact manifolds that are intimately related to symplectic manifolds. In particular, they correspond to a particular case of presymplectic manifolds and can naturally be embedded as hypersurfaces in their symplectification, which is a symplectic manifold in $2\n+2$ dimensions that is associated to $\M$.  
Furthermore, in Appendix~%
\ref{generalized_conformal} we show that a nonautonomous generalization of the so-called conformal Hamiltonian systems \cite{McLachlan:2001} also corresponds to a particular case of the time-dependent Hamiltonian formalism discussed above.

In order to project the symplectic form \eqref{poince2} into the hypersurface defined by \eqref{surf} we can simply
substitute the gauge choice $q^0=t$ and $p_0 = -H$ to  obtain
\be \label{newOmega}
  \Omega = - dH \wedge dt + \omega,
\ee
where we recall that $\omega$ is the original symplectic form of $T^*\M$.
Moreover, the equations of motion \eqref{ham_not_yet} reduce to
\be \label{iom_time}
i_{X} \Omega = 0, \qquad i_{X} dt = 1.
\ee
The vector field $X$, whose flow $\flow_s = e^{s X}$ generates  dynamical evolution on $\Mee$, is thus a \emph{zero mode} of $\Omega$. Note that, from Cartan's formula \eqref{cartan}, if we require that a vector field $X$ obeys \eqref{iom_time}, then it immediately
implies  $\Lie_X \Omega = 0$ so that $\Omega$ is preserved.
The symplectic form $\Omega$ is closed, however when projected into $T^*\Me$ it becomes \emph{degenerate}. One can see this through a  matrix representation
\cite{Sternberg}:
\be \label{OmDeg}
\Omega = \left( \begin{array}{c|cc} 0 & 0 & 0 \\ \hline 0 & 0 & -I \\  0 & +I & 0  \end{array}  \right)
\ee
which has a vanishing determinant. Therefore, the phase space of a nonconservative or dissipative Hamiltonian system written in the
autonomous form \eqref{ham_auto} is a presymplectic manifold (see again Fig.~\ref{submanifold}).

\subsection{Presymplectic integrators} \label{presymplectic_int}

Since a nonconservative system \eqref{ham_auto} admits a symplectification,  we can construct structure-preserving discretizations by imposing constraints on a \emph{symplectic integrator}. More precisely,  we can apply any standard symplectic integrator to the higher-dimensional conservative system \eqref{ham_big}. Then by the gauge choice in \eqref{q0} and \eqref{p0} we obtain an integrator for system \eqref{ham_auto}. Such a method will preserve a presymplectic structure. Accordingly, we refer to such integrators as \emph{presymplectic integrators}.

\begin{definition} \label{presymp_int} A numerical  map $\flowN_h$ is a \emph{presymplectic integrator} for a nonconservative  Hamiltonian system if it is obtained from a symplectic integrator for its symplectification under the gauge fixing \eqref{p0} and \eqref{q0}.
\end{definition}

Since the Hamiltonian $\Hc$ of the higher-dimensional system \eqref{ham_big} is conserved, we can apply standard results for symplectic integrators to derive conclusions about presymplectic integrators and thereby derive properties of systems without an underlying conservation law.

\begin{theorem} \label{preserv_ham_decay}
A presymplectic integrator $\flowN_h$ of order $r$ preserves the explicitly time-dependent Hamiltonian (assumed to be Lipschitz) up to
\be \label{ham_ext}
H \circ \flowN^\m_h   = H \circ\flow_{\m h}  + \bigo(h^r),
\ee
for exponentially large simulation times $s_\m = h \m = \bigo(h^r e^r e^{h_0/h})$.
\end{theorem}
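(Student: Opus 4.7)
The proof should proceed by lifting the nonconservative system to its symplectification and invoking Theorem~\ref{symp_ham} on the enlarged, genuinely conservative phase space. The plan is as follows.

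First, I would unpack Definition~\ref{presymp_int}: by construction, $\flowN_h$ is the restriction, under the gauge choices \eqref{q0} and \eqref{p0}, of a symplectic integrator $\hat{\flowN}_h$ acting on the extended cotangent bundle $T^*\Mee$, equipped with the symplectic form \eqref{poince2} and the \emph{time-independent} Hamiltonian $\Hc = p_0 + H(q^0,q,p)$ of \eqref{surf}. Because $H$ is Lipschitz by hypothesis and $p_0$ enters $\Hc$ linearly, $\Hc$ is Lipschitz on any bounded region of $T^*\Mee$, so all hypotheses of Theorem~\ref{symp_ham} are met for the pair $(\hat{\flowN}_h, \Hc)$.

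Next, I would apply Theorem~\ref{symp_ham} verbatim on $T^*\Mee$. Since $\Hc$ is conserved by the true extended flow $\hat{\flow}_s$, the theorem produces the bound
\be
\Hc \circ \hat{\flowN}_h^{\,\m}(\hat{x}_0) \;=\; \Hc(\hat{x}_0) \;+\; \bigo(h^r)
\ee
that is valid for the exponentially long window $s_\m = h\m = \bigo(h^r e^r e^{h_0/h})$, with the same shadow Hamiltonian machinery that supplied \eqref{QExpSmall}. Crucially, this argument goes through even though the \emph{original} Hamiltonian $H$ is not conserved: the shadow-Hamiltonian estimate only ever uses the constancy of $\Hc$ on $T^*\Mee$, which is genuine.

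Now I would translate back to $T^*\Me$ using the gauge. Substituting $\Hc = p_0 + H$, the displayed bound becomes $(\tilde{p}_0^{\,\m} - p_0(s_\m)) + (H\circ\flowN_h^{\,\m} - H\circ\flow_{s_\m}) = \bigo(h^r)$, where I used the exact conservation $\Hc\circ\hat{\flow}_{s_\m} = \Hc(\hat{x}_0)$ and the true-flow identity $p_0(s) = -H(s)$ of \eqref{p0}. Enforcing the gauge \eqref{p0} on the presymplectic iterates ties $\tilde{p}_0^{\,\m}$ to $-H\circ\flowN_h^{\,\m}$ in a manner consistent with the embedding of Fig.~\ref{submanifold}, reducing the two error contributions to a single $\bigo(h^r)$ deviation in $H$, which is exactly \eqref{ham_ext}.

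The main obstacle I anticipate is step three: one must argue carefully that the gauge fixing is compatible with the symplectic integrator in the sense that the numerical $p_0$-component tracks $-H$ along the iterates up to the same $\bigo(h^r)$ tolerance as the drift of $\Hc$, rather than to the much larger exponentially growing global error \eqref{glob_err}. This is where presymplecticity genuinely earns its keep — the approximate conservation of $\Hc$ forces the numerical trajectory to remain within $\bigo(h^r)$ of the constraint hypersurface defined by \eqref{surf}, so the drift of $p_0+H$ away from zero is the only relevant error, and it is governed by Theorem~\ref{symp_ham}. The remainder of the argument — the long-time validity of $s_\m$ and the Lipschitz bookkeeping — is inherited directly from the conservative case.
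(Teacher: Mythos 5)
Your overall strategy is the one the paper uses: lift to the symplectification $T^*\Mee$, apply Theorem~\ref{symp_ham} to the genuinely conserved Hamiltonian $\Hc = p_0 + H$ of \eqref{surf}, and then descend through the gauge fixing. Your first two steps reproduce the paper's proof, including the transfer of the Lipschitz hypothesis to $\Hc$ and the inheritance of the exponentially long time window.

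The difficulty is your step three. If you ``enforce the gauge \eqref{p0} on the presymplectic iterates'' by setting $\tilde p_0^{\,\m} = -H\circ\flowN_h^{\,\m}$, then $\Hc$ evaluated at the numerical iterate is identically zero and the estimate from Theorem~\ref{symp_ham} collapses to $0 = 0 + \bigo(h^r)$ --- true but vacuous, giving no control on $H\circ\flowN_h^{\,\m} - H\circ\flow_{\m h}$. Conversely, if $p_0$ is kept as a genuine numerical variable, approximate conservation of $\Hc$ only yields $(p_0)_\m + H_\m = \bigo(h^r)$, and you would still need $(p_0)_\m = p_0(\m h) + \bigo(h^r)$, which does not follow from staying near a level set of $\Hc$; your ``main obstacle'' paragraph gestures at this but does not close it. The paper's resolution is different from both readings: by the gauge fixing \eqref{q0}--\eqref{p0} and Definition~\ref{presymp_int} (see also the construction in Appendix~\ref{constructing_symp}, where one is instructed to ``ignore $p_0$ completely''), $p_0$ is not a dynamical or numerical degree of freedom at all --- it is the completely fixed function of time $p_0(s) = -H(s)$ determined by the \emph{true} flow, so the identical term $p_0(\m h)$ appears on both sides of \eqref{symp_ext} and cancels, leaving \eqref{ham_ext}. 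Your instinct that this is where presymplecticity ``earns its keep'' is right, but the mechanism is the by-fiat removal of $p_0$ from the numerics via the gauge, not an a posteriori tracking estimate for a numerically evolved $p_0$.
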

\begin{proof}
Let $\Phi_h : \mathbb{R}^{2\n+2} \to \mathbb{R}^{2\n+2}$ be a symplectic integrator for the high-dimensional Hamiltonian system \eqref{ham_big}. Since $\Hc$ is conserved, Theorem~\ref{symp_ham} implies
\be \label{symp_ext}
\Hc \circ \Phi^\m_h  = \Hc\circ\Psi_{\m h} + \bigo(h^r),
\ee
with $s_\m = \bigo(h^r e^r e^{h_0/h} )$ and where $\Psi_{s}$ denotes the true flow.   With the gauge choice \eqref{q0} and \eqref{p0} the system is projected into the hypersurface \eqref{surf}, thus $\Psi_s$ reduces to $\flow_{s}$, which is the true flow of \eqref{ham_auto}, and  $\Phi_h$ reduces to
$\flowN_h:\mathbb{R}^{2\n+1} \to \mathbb{R}^{2\n+1}$, which approximates $\flow_{s}$. Thus, using these maps together with the substitution of \eqref{surf} into  \eqref{symp_ext}, we conclude:
\be
H\circ \flowN_h^\m + p_0(\m h) = H\circ\flow_{\m h} + p_0( \m h) + \bigo(h^r).
\ee
It is important to recall that $p_0$ is a fixed function of time and this is why it can be canceled on  both sides to  give \eqref{ham_ext}.
\end{proof}

\begin{figure}\centering
\includegraphics[scale=.5]{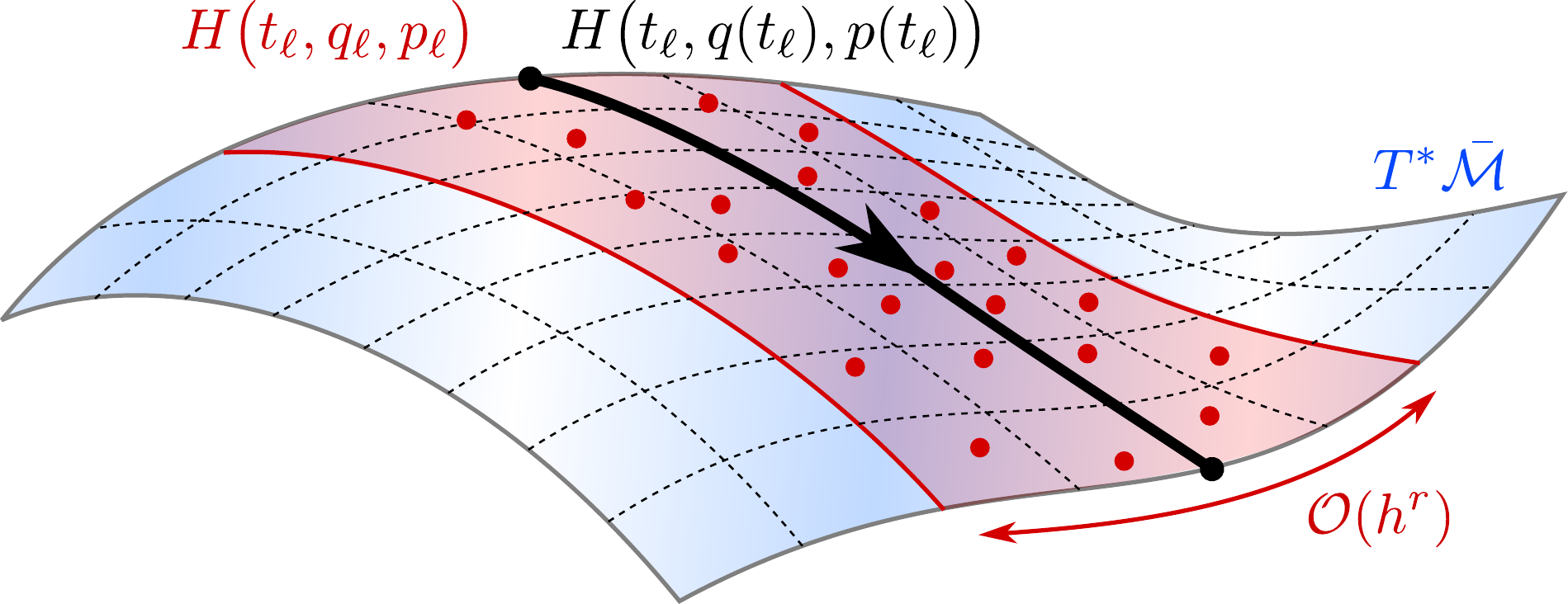}
\caption{\label{surface}
A presymplectic integrator closely preserves the time-dependent Hamiltonian;  Theorem~\ref{preserv_ham_decay}.
The numerical trajectories  lie on the same presymplectic manifold as the nonconservative system and the numerical value $H(t_\m, q_\m, p_\m)$ exhibits a small and bounded error  of size $\bigo(h^r)$ with respect to the true value $H(t_\m, q(t_\m), p(t_\m))$.}
\end{figure}

Theorem~\ref{preserv_ham_decay} is an extension of Theorem~\ref{symp_ham} to nonconservative Hamiltonian systems. The numerical solutions provided by presymplectic integrators are thus within a small and bounded error $\bigo(h^r)$ from the hypersurface  \eqref{surf}; see Fig.~\ref{surface} for an illustration.  Hence, in the case of dissipative systems, whatever convergence or decaying properties the system may have, a presymplectic integrator will closely reproduce its behavior.

In Appendix~%
\ref{constructing_symp} we provide a detailed step-by-step procedure for constructing presymplectic integrators. The procedure is straightforward, essentially involving an application of symplectic integrators with a natural choice for incorporating the time variable.
We now turn to the consequences of Theorem~\ref{preserv_ham_decay} to
dissipative systems.

\section[Preserving rates and stability of dissipative systems]{Preserving rates and stability of \\ dissipative systems} \label{implication_optimization}

Let us consider how presymplectic integrators can be used to construct
``rate-matching'' discretizations.
We consider a standard optimization problem \eqref{optimization}
and study dissipative systems obtained from the
Hamiltonian \eqref{gen_ham}, which we repeat here for convenience:
\be\label{gen_ham2}
H = e^{-\eta_1(t)} T(t, q, p) + e^{\eta_2(t)} f(q)
\ee
where $\eta_1, \eta_2$  are positive and nondecreasing. These functions are responsible for introducing dissipation in the system.  Recall that we assume  the kinetic energy $T$ is Lipschitz continuous.
Given a specific system, suppose that a convergence rate $f(q(t)) - f(q^\star)$
as described in \eqref{gen_rate} is known, where $f(q^\star)$ is a minimum of $f$ in a neighborhood of the initial conditions.  We are now able to prove Theorem~\ref{main_theorem} stated in the introduction as a corollary of Theorem~\ref{preserv_ham_decay}.

\begin{proof}[Proof of Theorem~\ref{main_theorem}]
Replacing \eqref{gen_ham2} into \eqref{ham_ext}, where we now fix $s = t$, yields
\be \label{first_rate}
f(q_\m) \le f(q(t_\m))  + e^{-\eta_1(t_\m) - \eta_2(t_\m)}\left\{ T\big(t_\m, q(t_\m), p(t_\m)\big) - T\big(t_\m, q_\m, p_\m\big) \right\} + K h^r e^{-\eta_2(t_\m)}
\ee
for all $t_\m = \bigo( h^r e^r e^{h_0/h})$ and some constant $K > 0$.
Recall that $q_\m$ and $q(t_\m)$ are the discrete- and continuous-time trajectories, respectively.
Let $L_T$ be the Lipschitz constant of the kinetic energy $T$.  Substituting the global error \eqref{glob_err} into the second term gives
\be \label{fbound}
| f(q_\m) - f(q(t_\m)) | \le  
 h^r e^{-\eta_2(t_\m)}  \left( K + L_T C ( e^{L_{\flowN}t_\m} - 1 ) e^{-\eta_1(t_\m)}  \right).
\ee
We thus conclude that
\be \label{frate2}
f(q_\m) = f(q(t_\m)) + \bigo\left( h^r e^{-\eta_2(t_\m)} \right)
\ee
provided
\be \label{eta_bound}
(e^{L_{\flowN} t} - 1) e^{-\eta_1(t)} < \infty
\ee
for all sufficiently large $t$.
\end{proof}

Therefore, under suitable conditions, presymplectic integrators reproduce the continuous-time rates of convergence of dissipative Hamiltonian systems.  We make several remarks:

\begin{itemize}
\item As mentioned earlier, the assumptions of Theorem~\ref{main_theorem} are  mild;
the kinetic energy being Lipschitz is naturally satisfied in a range of applications, and the Lipschitz condition \eqref{lipschitz_flow} is also satisfied for a large class of integrators. The underlying reason is because $\M$ is a smooth manifold.

\item We  used a conservative global error \eqref{glob_err} to bound the kinetic term in \eqref{first_rate}. A specialized analysis may yield a better bound, which could allow condition \eqref{eta_bound} to be relaxed. However, this would have to be justified by a dedicated backward error analysis.

\item Increasing the order of accuracy $r$  influences \eqref{frate2} in a beneficial way.  However, the error is dominated by $e^{-\eta_2}$ which suggests that in this optimization context using higher-order integrators may not be of significant practical importance.  That said, higher-order methods tend to be more stable so there may be other benefits.

\item Interestingly, the error in \eqref{frate2} is even smaller than the error in approximating the Hamiltonian \eqref{ham_ext}, thanks to the function $\eta_2$. If $\eta_2$ grows just enough, this error may become completely negligible, even for large step sizes and integrators of low order.

\item We recall that the restriction $t_\m = h\m \sim h^r e^r e^{h_0/h}$ may be irrelevant.
To give an idea, with $r=1$, $h_0=1$, and $h=0.1$ we have $\m \sim 6\cdot 10^4$, and with $h=0.01$ we have $\m \sim 10^{43}$. These are sufficiently large iteration
numbers for most practical purposes.

\end{itemize}

Note that Theorem~\ref{main_theorem} is a consequence of
Theorem~\ref{preserv_ham_decay}, which previously was known to hold only for
conservative systems.  The implications are similar to those of the conservative case.  Indeed, the reason why presymplectic integrators are able to closely reproduce the time-dependent Hamiltonian is because---as in the
conservative case---they exactly preserve
a shadow Hamiltonian, despite being
nonconservative.  This is important physically since, in principle,
the Hamiltonian
contains all the information one may wish to extract about the system.
In particular, our result implies that such methods
closely preserve the \emph{energy}  as well---which is not the
Hamiltonian since it depends explicitly on time.
For instance, within the generalized conformal Hamiltonian formalism
discussed in Appendix~%
\ref{generalized_conformal}, the ``physical energy''
and ``physical momenta'' are given by
\be
\mathcal{E}\big(q, p_{\textnormal{mech}}\big) =  e^{-\eta(t)} H(t, q, p),
\qquad
p_{\textnormal{mech}} = e^{-\eta(t)} p.
\ee
Note that, in contradistinction to $H$, the energy  $\mathcal{E}$ \emph{does not
depend explicitly on time}.
Thus, since presymplectic integrators closely preserve $H$, they
also closely preserve $\mathcal{E}$.
This provides another perspective
on the result expressed in Theorem~\ref{main_theorem}. Indeed, if
for a particular system we have that
$\E \ge 0 $ and $\dot{\E} \le 0$, then $\E$ is also a
\emph{Lyapunov function}.
\footnote{As a concrete example, consider \eqref{quad_kin_breg}
below with $\eta_1 = \eta_2 = \eta(t)$.  The physical energy
is given by $\mathcal{E} = (1/2) \dot{q} \cdot M \dot{q} + f(q)$,
and with $p_{\textnormal{mech}} = M \dot{q} = e^{-\eta(t)} p$ we
have $\mathcal{E} =  e^{-\eta(t)}H$. In this case $\mathcal{E} \ge 0$
and $d \mathcal{E}/dt = - \dot{\eta} \, \dot{q} \cdot M \dot{q}  \le 0$
(assuming that $M$ is symmetric and positive semidefinite).}
Once a Lyapunov function is available, one
can deduce stability properties of the system around critical points.
Therefore, since
presymplectic integrators also reproduce the behavior of the Lyapunov
function---through $H$---they
faithfully reproduce the phase portrait and the stability of
critical points.
Moreover, the \emph{decaying rate} of $\E$
will also be numerically imitated and this is another way of seeing
why such methods are ``rate-matching.''
We stress that  these ideas correspond to a dissipative generalization
of what
is known to hold true for
structure-preserving integrators of conservative systems, in which
case $\mathcal{E} = H$ is constant.

\subsection{The choice of damping}
Let us comment on some consequences of the condition \eqref{eta_bound} which can impose some limitations. There are two ways to satisfy it.  The first is if the method converges quickly so that  $e^{L_{\flowN}t_\m - \eta_1(t_\m)}$ remains bounded regardless of $\eta_1$, or equivalently
we enforce $t_\m \le \bar{t}$, where $\bar{t}$ is  a solution to
\be
L_{\flowN} \bar{t} -\eta_1(\bar{t}) \le C
\ee
for some constant $C > 0$.
The second way to satisfy \eqref{eta_bound} is if $\eta_1$ grows fast enough, allowing $t_\m$ to be arbitrarily large.
For instance, setting
\be \label{eta1_large}
\eta_1(t) = \gamma t
\ee
for some constant $\gamma \ge L_{\flowN}$ suffices for all $t \ge 0$.  Curiously, this choice implies constant damping and can be related to the heavy ball method \cite{Polyak:1964}---see also \cite{Franca:2019} for details.\footnote{
With $\eta_1 = \eta_2 = \gamma t$ and $T = \tfrac{1}{2}\| p \|^2$ into \eqref{gen_ham2} the equations of motion
give $\ddot{q} + \gamma \dot{q} = - \nabla f(q)$, which is a nonlinear generalization of the damped harmonic oscillator.
The heavy ball method \cite{Polyak:1964} is actually a structure-preserving---conformal symplectic---discretization of this system \cite{Franca:2019}.}

As an alternative, from \eqref{eta_bound} we have
\be \label{eta1_series}
\left(1+L_{\flowN} t + \tfrac{1}{2} (L_{\flowN} t)^2 + \dotsm \right) e^{-\eta_1(t)} < \infty,
\ee
which is satisfied if $(L_{\flowN} t)^m e^{-\eta_1(t)} < \infty$ for all powers $m$, i.e.,
$\eta_1(t) \sim m \log( L_{\flowN} t)$. Thus one can choose
\be \label{eta1_small}
\eta_1(t) = \gamma \log t
\ee
with a suitable $\gamma > 0$ to ensure that \eqref{eta1_series} holds up to some high order term.
Also curiously, the choice \eqref{eta1_small} is related to the damping of Nesterov's method \cite{Nesterov:1983}
which attains the optimal convergence rate for convex functions $f$.\footnote{
Choosing $\eta_1 = \eta_2 = \gamma \log t$ yields the differential equation
$\ddot{q} + \tfrac{\gamma}{t} \dot{q} = -\nabla f(q)$. Nesterov's method can be seen as a discretization of this system \cite{Candes:2016,Franca:2019}.}

Note that the choice \eqref{eta1_small} can guarantee \eqref{eta1_series} only up to a finite power $m$, thus we are on the edge of violating this condition---in fact we are formally violating it but in practice this may yield a viable algorithm. On the other hand,
the choice \eqref{eta1_large} may be overkill. It may be reasonable to consider an intermediate alternative:
\be \label{damp_between}
\eta_1(t) = \gamma_1 \log t + \gamma_2 t^{\delta}
\ee
for constants $0 < \delta \le 1$ and $\gamma_1,\gamma_2 > 0$. With this choice the condition on the $m$th term in the series \eqref{eta1_series}
becomes $L_{\flowN}^m  t^{m - \gamma_1}  e^{-\gamma_2  t^{\delta}} < \infty$, which can be guaranteed even if $\gamma_1 < m$ with suitable choices of $\gamma_2$ and $\delta$.

Summing up, presymplectic integrators are able to match the continuous-time
rates provided
the condition \eqref{eta_bound} is satisfied, which involves an appropriate choice of damping. If the system is overdamped this condition is more likely to hold, however the system would tend to be slower. On the other hand, if the damping is weak the system tends to be fast, but at the cost of violating \eqref{eta_bound}. We thus see the tradeoff is delicate and is  problem-dependent in general, given that $L_\flowN$ in \eqref{eta_bound} is related to the Lipschitz constant of the original vector field which depends on the potential $f(q)$.

\section{Bregman dynamics} \label{bregman_sec}

Given that the Bregman Hamiltonian \cite{Wibisono:2016} provides a general framework for deriving continuous-time optimization procedures, we consider this case in some detail. The Hamiltonian has the form
\be \label{bregman_ham}
  H = e^{\alpha + \gamma} \left\{ D_{h^\star}\!\left( \nabla h(q) + e^{-\gamma} p, \nabla h(q) \right)
  + e^{\beta} f(q) \right\},
\ee
where $\alpha$, $\beta$, and $\gamma$ are all functions of time $t$, and  required to satisfy the following scaling conditions:
\be \label{scaling}
  \dfrac{d \beta}{dt} \le e^{\alpha}, \qquad \dfrac{d \gamma}{dt} = e^{\alpha}.
\ee
The kinetic energy in \eqref{bregman_ham} is given in terms of the Bregman divergence,
\be \label{breg_div}
  D_h(y,x) \equiv h(y) -h (x) -  \big \langle \nabla h(x), y-x \big\rangle,
\ee
which is nonnegative for a given convex function $h : \M \to \mathbb{R}$. Recalling the definition of the convex
dual, $h^\star$ of $h$, defined by the Legendre-Fenchel transformation
\be \label{breg_sup}
  h^\star(p) \equiv \sup_{v} \left\{ \langle p, v \rangle - h(v) \right\},
\ee
one can show that Hamilton's equations are given by%
\footnote{Equivalently, one can write the second-order differential equation
$$
\ddot{q} + \big( e^{\alpha} - \dot{\alpha} \big) \dot{q} +
e^{2\alpha + \beta}\left[ \nabla^2 h \left(q + e^{-\alpha} \dot{q}\right)  \right]^{-1}
\nabla f(q) = 0.
$$
We see that $\alpha$ basically controls the damping, while $2\alpha+\beta$ increases the strength of the force $-\nabla f$. Both play a major role in the stability of the system, and they are not independent since $\dot{\beta} \le e^{\alpha}$.
}
\begin{subequations} \label{breg_motion}
\begin{align}
\dot{q} &= e^{\alpha}\left\{ \nabla h^*\left(  \nabla h(q)  + e^{-\gamma }p \right) - q   \right\}, \\
\dot{p} &= - e^{\alpha+\gamma} \nabla^2 h(q) \left\{  \nabla h^*\left(\nabla h (q) + e^{-\gamma} p\right) -q\right\}
+ e^{\alpha} p - e^{\alpha + \beta + \gamma} \nabla f(q).
\end{align}
\end{subequations}
In the case where  $f$ is convex,  the convergence rate of this system is given by \cite{Wibisono:2016}
\be \label{rate_breg}
f(q(t)) - f(q^\star) = \bigo\left( e^{-\beta(t)} \right).
\ee
Moreover, for a given $\alpha$, the optimal rate is obtained with $\dot{\beta}  = e^{\alpha}$, thus
$\beta = \gamma + C$, for some constant $C$, and both are determined  in terms of $\alpha$. We thus have:
\be \label{optimal_rate}
f(q(t)) - f(q^\star) = \bigo\left( e^{-\beta(t)} \right), \qquad  \beta(t) = \int^t e^{\alpha(t')} dt', \qquad \gamma(t) = \beta(t) + C.
\ee
A choice considered by \cite{Wibisono:2016} is
\be \label{scaling2}
\alpha = \log c - \log t, \qquad \beta = c \log t + C, \qquad  \gamma = c \log t ,
\ee
with $c > 0$,
whereby the convergence rate \eqref{optimal_rate} becomes the polynomial $\bigo\left( t^{-c} \right)$.
Another possibility is
\be\label{scaling1}
\alpha = \log c, \qquad \beta = c t, \qquad \gamma = c t + C  ,
\ee
leading to an exponential rate of $\bigo(e^{-c t})$.

\subsection{Separable case}

To consider a case where the Bregman Hamiltonian is separable, let us start with the choice
$h(x) = \tfrac{1}{2} x \cdot M x $
with a symmetric positive semidefinite matrix $M$ so that
$h^\star(x) = \tfrac{1}{2} x \cdot M^{-1} x$ and the Bregman divergence
is given by
$D_{h^*}(y,x) = \tfrac{1}{2} (y-x) \cdot  M^{-1}(y-x)$.
In this case the  Hamiltonian \eqref{bregman_ham} takes the following form:
\be\label{quad_kin_breg}
H = \dfrac{1}{2} e^{- \eta_1(t) }  p_i   (M^{-1})^{ij} p_j  + e^{ \eta_2(t) } f(q)
\ee
where we defined
\be \label{damp_func}
\eta_1 \equiv \gamma -\alpha , \qquad \eta_2 \equiv \alpha + \beta + \gamma.
\ee
The quadratic kinetic energy is standard in classical mechanics and we see that $M$ plays the role of a mass matrix.
We thus have the equations of motion
\be \label{ham_class}
\dot{q} = e^{-\eta_1(t)} M^{-1} p, \qquad
\dot{p} = - e^{\eta_2(t)} \nabla f(q).
\ee
One can now use any presymplectic integrator  to simulate such a dissipative system---see Appendix~%
\ref{constructing_symp} for several choices.
Because the Hamiltonian \eqref{quad_kin_breg} is separable, standard approaches yield explicit methods which are convenient in practice.
Assuming that we use a presymplectic integrator of order $r$,
Theorem~\ref{main_theorem} tells us that the continuous-time rate of
convergence will be preserved:
\be \label{quad_rate}
\begin{split}
f(q_\m) - f^\star &=
\bigo\left( e^{-\beta(t_\m)}\right)
+\bigo\left( h^r e^{-\alpha(t_\m)-2\beta(t_\m)}\right) \\
&\sim e^{-\beta(t_\m)}\left(1+h^r e^{-\alpha(t_\m)-\beta(t_\m)}\right) \\
&\sim e^{-\beta(t_\m)}
\end{split}
\ee
provided $\eta_1$ as defined in \eqref{damp_func} obeys the
condition \eqref{eta_bound}.

We present two explicit methods. Using the presymplectic Euler method
given by \eqref{presymp_euler}, which has order $r=1$, we obtain
the following updates:
\be \label{quad_seu}
\begin{split}
p_{\m+1} &= p_{\m} - h e^{\eta_2(t_{\m})} \nabla f (q_\m), \\
t_{\m+1} &= t_{\m} + h, \\
q_{\m+1} &= q_{\m} + h e^{-\eta_1(t_{\m})} M^{-1} p_{\m+1}.
\end{split}
\ee
This algorithm requires only one gradient computation per iteration.
In a similar way, using the presymplectic leapfrog method
\eqref{presymp_leapfrog}, which
has order $r=2$, we have:
\be \label{quad_leap}
\begin{split}
p_{\m+1/2} &= p_{\m} - (h/2) e^{\eta_2(t_\m)}\nabla f(q_{\m}), \\
t_{\m+1} &= t_{\m} + h, \\
q_{\m+1} &= q_{\m} + (h/2) \left( e^{-\eta_1(t_\m)} + e^{-\eta_1(t_{\m+1})}
    \right) M^{-1} p_{\m+1/2}, \\
p_{\m+1} &= p_{\m+1/2} - (h/2) e^{\eta_2(t_{\m+1})} \nabla f(q_{\m+1}).
\end{split}
\ee
This method also only requires one gradient computation per iteration---the
last one can be reused in the subsequent iteration---but it is more accurate
and stable than \eqref{quad_seu}.
Note also that similarly to \eqref{quad_seu} and \eqref{quad_leap} we can
consider \eqref{presymp_euler2}
and \eqref{presymp_leapfrog2}, respectively, which yield adjoint methods
to the ones above.
One can now choose any suitable scaling functions in \eqref{damp_func} and
substitute into either \eqref{quad_seu} or \eqref{quad_leap} to obtain a
specific optimization algorithm.

\subsection{Nonseparable case} \label{explicit_bregman}

We turn to the case of a nonseparable Hamiltonian, where the integrators considered thus far yield implicit updates that require solving nonlinear equations. This not only increases the computational burden but can affect the numerical stability. In particular, for the Bregman Hamiltonian \eqref{bregman_ham}, we need a construction suited to nonseparable cases.   In order to obtain \emph{explicit} methods,
motivated by \cite{Tao:2016} we introduce extra degrees of freedom. Thus,
given a  Hamiltonian $H(t,q,p)$, we double its degrees of freedom  by introducing an augmented Hamiltonian:
\be\label{tao_ham}
\bar{H}(t,q,p,\bar{t}, \bar{q},\bar{p}) \equiv H(t, q,\bar{p}) + H(\bar{t}, \bar{q}, p) +
\dfrac{\xi}{2} \left( \| q-\bar{q} \|^2 + \| p - \bar{p} \|^2 \right).
\ee
Here $\xi > 0$ is a coupling constant that controls the strength of the last term,
which forces $q = \bar{q}$ and $p = \bar{p}$.\footnote{It is not necessary to introduce the same term
for $t$ and $\bar{t}$ since they will  be equal thanks to  \eqref{q0}.
The constant $\xi$ has to be carefully tuned in practice;
see \cite{Tao:2016} for details.}
The presymplectic structure of this system is now
\be
\Omega = -dH\wedge dt + dp_j\wedge dq^j - dH \wedge d\bar{t} +  d\bar{p}_j \wedge d\bar{q}^j  . 
\ee
Hamilton's equations, obtained from \eqref{tao_ham}, will preserve $\Omega$.  The equations of motion are equivalent to those of the original system when
$q = \bar{q}$, $p = \bar{p}$ and $t = \bar{t}$.
We thus propose the numerical maps
\begin{equation} \label{tao_map1}
%
\flowN_{h}^{A}\begin{pmatrix} t \\ q \\ p \\ \bar{t} \\ \bar{q} \\ \bar{p} \end{pmatrix} =
\begin{pmatrix}
t \\ q \\ p - h \nabla_q H(t, q, \bar{p}) \\ \bar{t} + h \\ \bar{q} + h \nabla_{\bar{p}} H(t, q, \bar{p}) \\ \bar{p}
\end{pmatrix}, \qquad
\flowN_{h}^B \begin{pmatrix} t \\ q \\ p \\ \bar{t} \\ \bar{q} \\ \bar{p} \end{pmatrix} =
\begin{pmatrix}
t + h \\ q + h \nabla_p H(\bar{t}, \bar{q}, p) \\ p \\ \bar{t} \\ \bar{q}  \\ \bar{p} - h \nabla_{\bar{q}} H(\bar{t}, \bar{q}, p)
\end{pmatrix} ,
\end{equation}
and
\begin{equation} \label{tao_map2}
\flowN_{h}^C \begin{pmatrix} t \\ q \\ p \\ \bar{t} \\ \bar{q} \\ \bar{p} \end{pmatrix} =
\dfrac{1}{2}\begin{pmatrix}
2 t \\
q+\bar{q} + \cos(2\xi h)(q-\bar{q}) + \sin(2\xi h)(p-\bar{p}) \\
p+\bar{p} - \sin(2\xi h)(q-\bar{q}) + \cos(2\xi h)(p-\bar{p}) \\
2 \bar{t} \\
q+\bar{q} - \cos(2\xi h)(q-\bar{q}) - \sin(2\xi h)(p-\bar{p}) \\
p+\bar{p} + \sin(2\xi h)(q-\bar{q}) - \cos(2\xi h)(p-\bar{p}) \\
\end{pmatrix} .
\end{equation}
A presymplectic integrator for any (nonseparable) time-dependent Hamiltonian system can then be constructed by composing these maps. For instance, the Strang composition
given by
\be \label{tao_flow}
\flowN_{h/2}^{A} \circ \flowN_{h/2}^B \circ \flowN_{h}^C \circ \flowN_{h/2}^B \circ \flowN_{h/2}^A
\ee
is known to generate a method of order $r=2$---this is the same composition as the leapfrog.
Note that the maps \eqref{tao_map1}--\eqref{tao_map2} are completely explicit in all variables and this approach works for any time-dependent Hamiltonian.

In particular, for the Bregman Hamiltonian \eqref{bregman_ham} it suffices
to substitute the gradients
$\dot{q} = \nabla_p H$ and $\dot{p} = - \nabla_q H$ from  \eqref{breg_motion}
into \eqref{tao_map1}--\eqref{tao_map2}, followed by the composition \eqref{tao_flow}.
This  yields an explicit, second-order
integrator for the general Bregman dynamics.
Thanks to Theorem~\ref{main_theorem}, this generates an optimization algorithm that may closely preserve
the continuous rate of convergence \eqref{rate_breg} for suitable functions $\alpha$, $\beta$ and $\gamma$.

\section{Numerical experiments} \label{numerical}

The purpose of this section is twofold. First, we aim to investigate
the claim of Theorem~\ref{preserv_ham_decay}---our main
result regarding structure-preserving integrators for nonconservative
Hamiltonian systems---numerically.  Second, we illustrate that our approach can
form the basis for constructing reliable optimization methods based on
dissipative Hamiltonian dynamics according to Theorem~\ref{main_theorem}.

\subsection{Error growth in the Hamiltonian}

In order to verify Theorem~\ref{preserv_ham_decay} we need to
consider cases where an exact analytical solution is
available.
Thus, consider the Hamiltonian \eqref{quad_kin_breg} with $M=I$
for simplicity and with the one-dimensional potential
$f(q) = q^2 / 2$ (higher-dimensional quadratic functions can be treated
similarly by decoupling the degrees of freedom into normal modes).
First, choose $\eta_1 = \eta_2 = \gamma t$
so Hamilton's equations become
the damped harmonic oscillator:
\be
\ddot{q} + \gamma \dot{q} + q = 0.
\ee
The exact solution of the initial value
problem with $q(0) = q_0$ and $p(0)=0$ is given by
\be \label{sol_const}
q(t) = q_0 e^{-t \gamma/2}\left( \cos\left( \dfrac{\omega t}{  2}\right) +
 \dfrac{\gamma}{\omega} \sin \left(\dfrac{\omega t}{2}\right)  \right), \qquad
p(t) = -\dfrac{2 q_0}{\omega} e^{t \gamma / 2} \sin\left(\dfrac{\omega t}{2}
\right),
\ee
where $\omega \equiv \sqrt{4 - \gamma^2}$.
A plot of these functions as well as the Hamiltonian
is in Fig.~\ref{exact_sol_const}.

\begin{figure}
\centering
\includegraphics[width=0.95\textwidth]{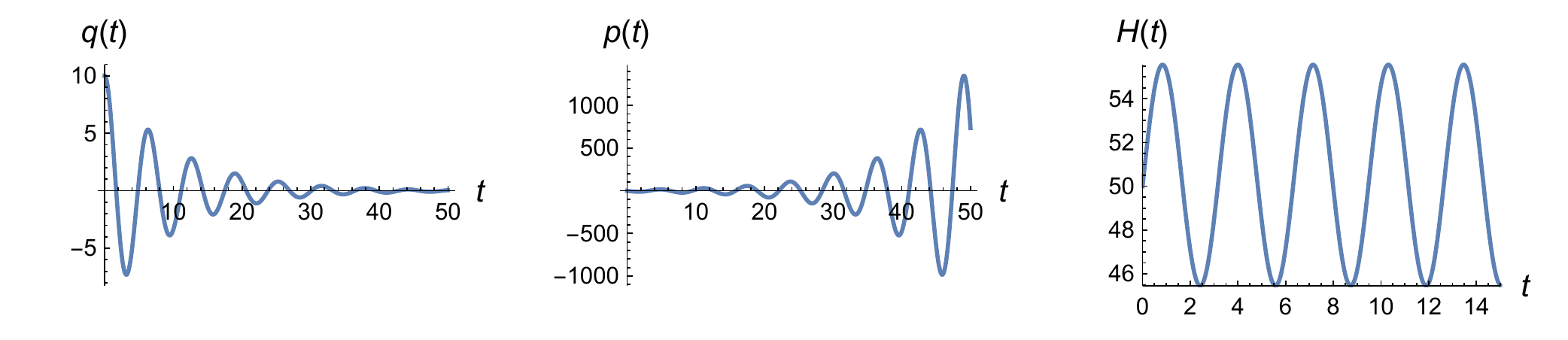}
\caption{Exact solution \eqref{sol_const} with
$\gamma = 0.2$ and $q_0=10$.
We replaced the actual trajectory and momentum into
the Hamiltonian, which varies with time and is not conserved.
\label{exact_sol_const}
}
\end{figure}

We numerically integrate the equations of motion using
the presymplectic Euler method \eqref{quad_seu}, the
presymplectic leapfrog \eqref{quad_leap},
and a method
of order $r=4$ based on the Suzuki-Yoshida construction
\eqref{yoshida_comp} with \eqref{quad_leap} as  the base method.
All of these integrators obtain accurate estimates of $q(t)$ and
$p(t)$. In Fig.~\ref{quad_const_num} (left) we show
the error in the numerical estimate of the Hamiltonian
over a sufficiently large simulation time. Note that this error
remains bounded.  To verify \eqref{ham_ext} in more detail,
we fix a simulation time and vary the step size to compute the maximum
error
\be \label{max_error}
\max_\m | H(t_\m, q(t_\m), p(t_\m)) - H(t_\m, q_\m, p_\m) |
\ee
over the entire history of the system; each simulation
uses a fixed step size $h$.
We report the results for this problem in
Fig.~\ref{quad_const_num} (right).
As we can see, the relation \eqref{ham_ext} is satisfied precisely.

\begin{figure}[t]
\centering
\includegraphics[width=.45\textwidth]{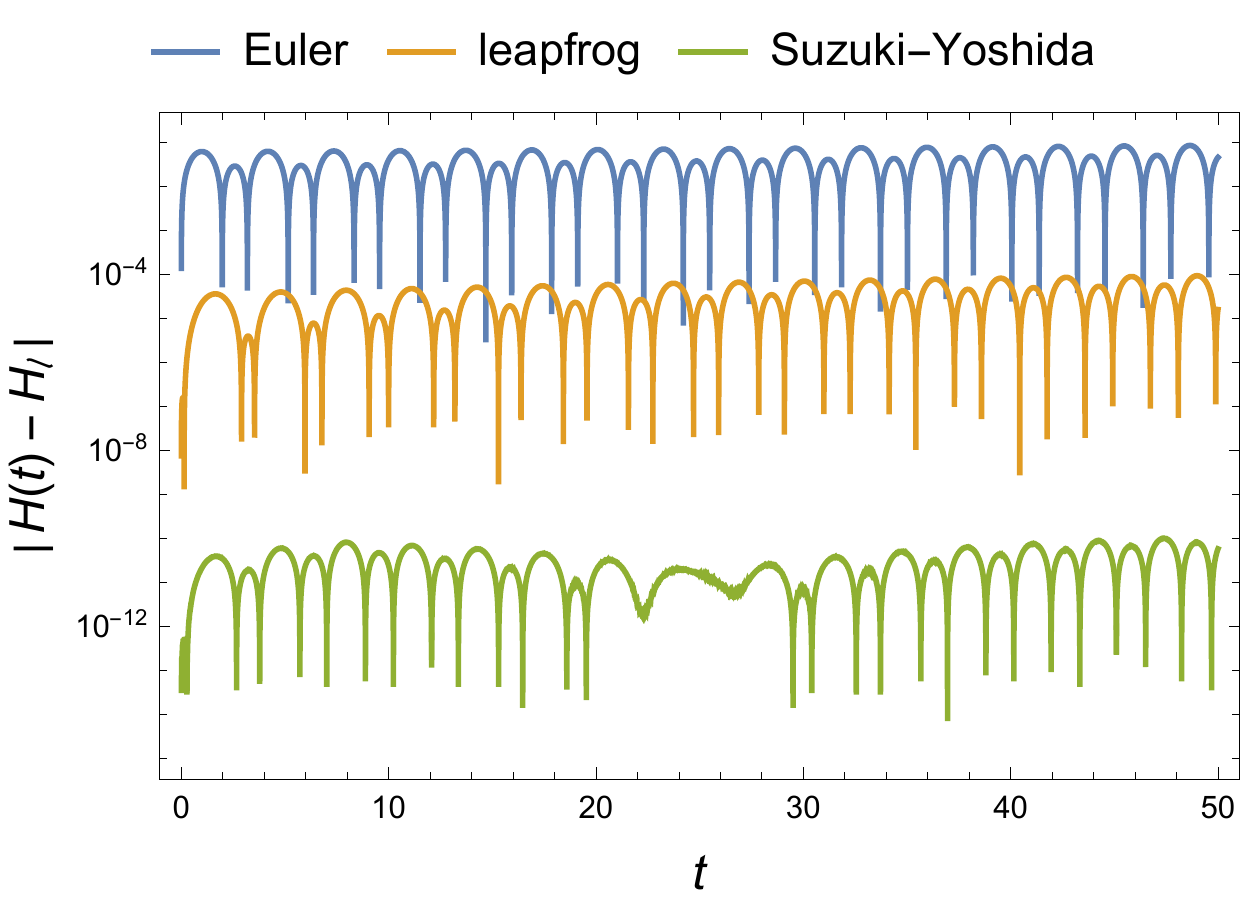}\qquad%
\includegraphics[width=.45\textwidth]{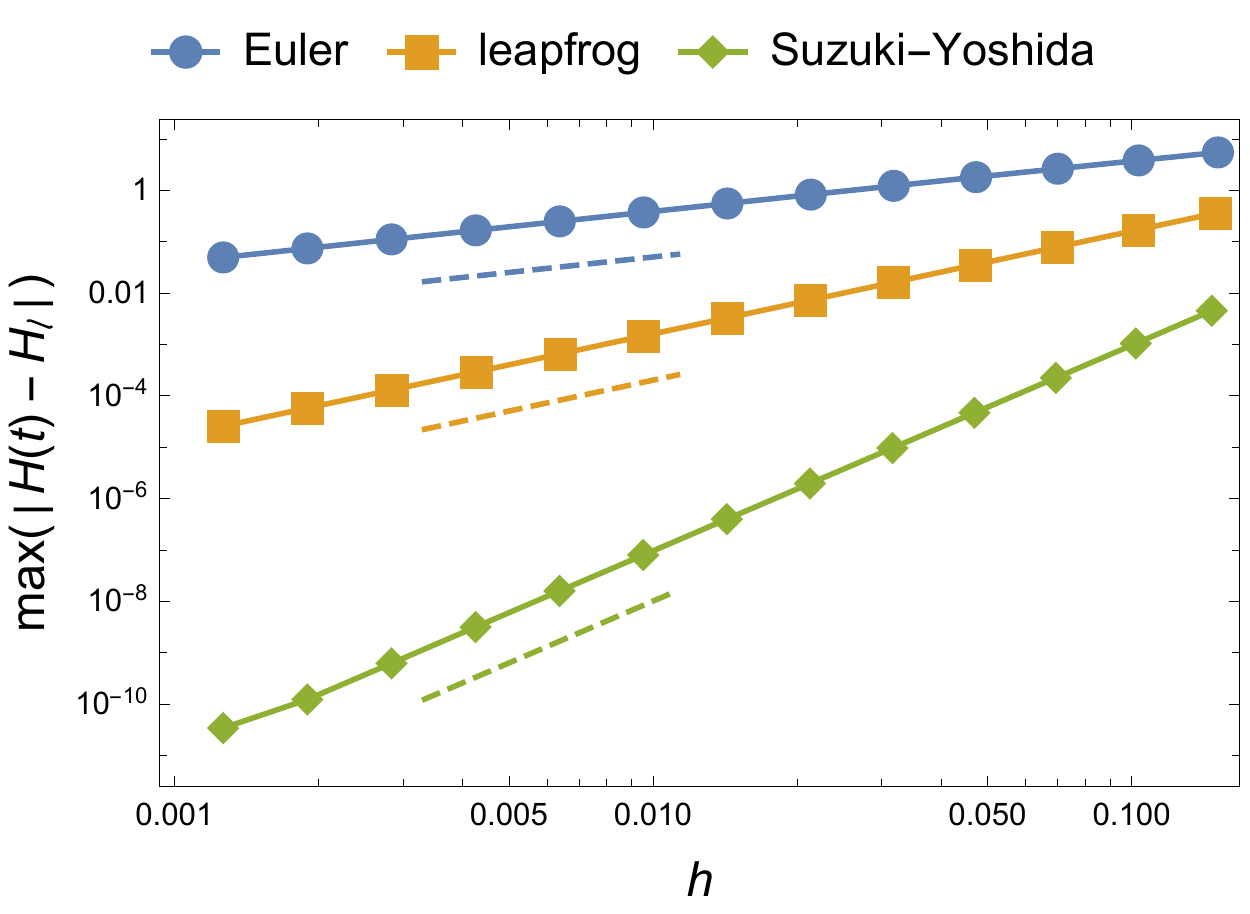}%
\caption{Error in the nonconserved Hamiltonian using the exact
solution \eqref{sol_const}
and numerical estimates with presymplectic integrators.
\emph{Left:} We show $|H(t_\m) - H_\m|$ (in log scale)
for  $t_{\textnormal{max}} = 50$ and
$\m_{\textnormal{max}}= 3 \times 10^4$ iterations, thus
a step size $h = t_{\textnormal{max}}/\m_{\textnormal{max}}$.
We choose $\gamma = 0.2$ as in
Fig.~\ref{exact_sol_const}.
We use the presymplectic Euler \eqref{quad_seu}, the leapfrog
\eqref{quad_leap}, and
the Suzuki-Yoshida \eqref{yoshida_comp} with the latter
as the base method. The error remains bounded over time
even though the Hamiltonian is not conserved. \emph{Right:} Under
the same setting, we
perform several simulations during a time interval $t \in [0,10]$
with different step sizes $h$ and calculate \eqref{max_error}.
The dashed lines were generated independently to verify
the errors $\bigo(h)$, $\bigo(h^2)$ and $\bigo(h^4)$, respectively.
\label{quad_const_num}
}
\end{figure}

As a second example, consider the one-dimensional potential $f(q) = q^2/2$
but now with damping functions $\eta_1 = \eta_2 = \gamma \log(t+1)$.
We thus have an harmonic oscillator with decaying damping:
\be \label{decaying_oscillator}
\ddot{q} + \dfrac{\gamma}{t+1} \dot{q} + q = 0.
\ee
The exact solution of this system with initial conditions $q(0) = q_0$ and
$p(0)=0$ is
\be \label{sol_dec}
\begin{split}
q(t) &= \dfrac{q_0 \pi}{2}  (t+1)^{-\alpha_-} \left(
J_{\alpha_+}(1)
Y_{\alpha_-}(t+1) -
Y_{\alpha_+}(1)
J_{\alpha_-}(t+1)
\right), \\
p(t) &= \dfrac{q_0 \pi}{2} (t+1)^{\alpha_+}\left(
Y_{\alpha_+}(1)J_{\alpha_+}(t+1) -
J_{\alpha_+}(1)Y_{\alpha_+}(t+1)
\right),
\end{split}
\ee
where $\alpha_\pm \equiv (\gamma\pm 1)/2$ and
$J_\alpha, Y_\alpha$ denote  Bessel functions of the first and second
kinds, respectively.
A plot of these functions together with the Hamiltonian is
in Fig.~\ref{exact_sol_dec}.

\begin{figure}[t]
\centering
\includegraphics[width=0.95\textwidth]{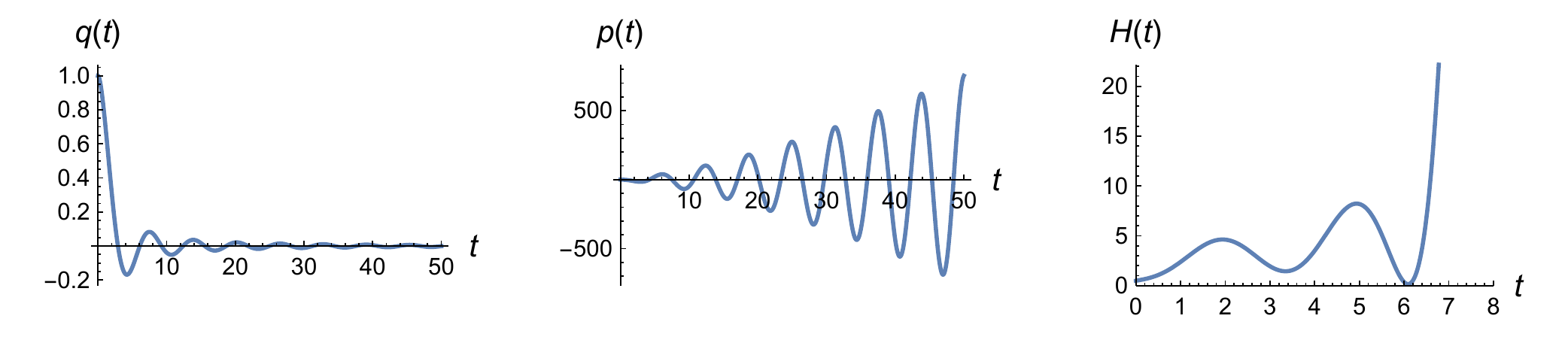}
\caption{Exact solution \eqref{sol_dec} with
$\gamma = 3$ and $q_0=1$.
We replaced the position and momentum into
the Hamiltonian which oscillates and grows very fast with $t$.
\label{exact_sol_dec}
}
\end{figure}

\begin{figure}[t!]
\centering
\includegraphics[width=.45\textwidth]{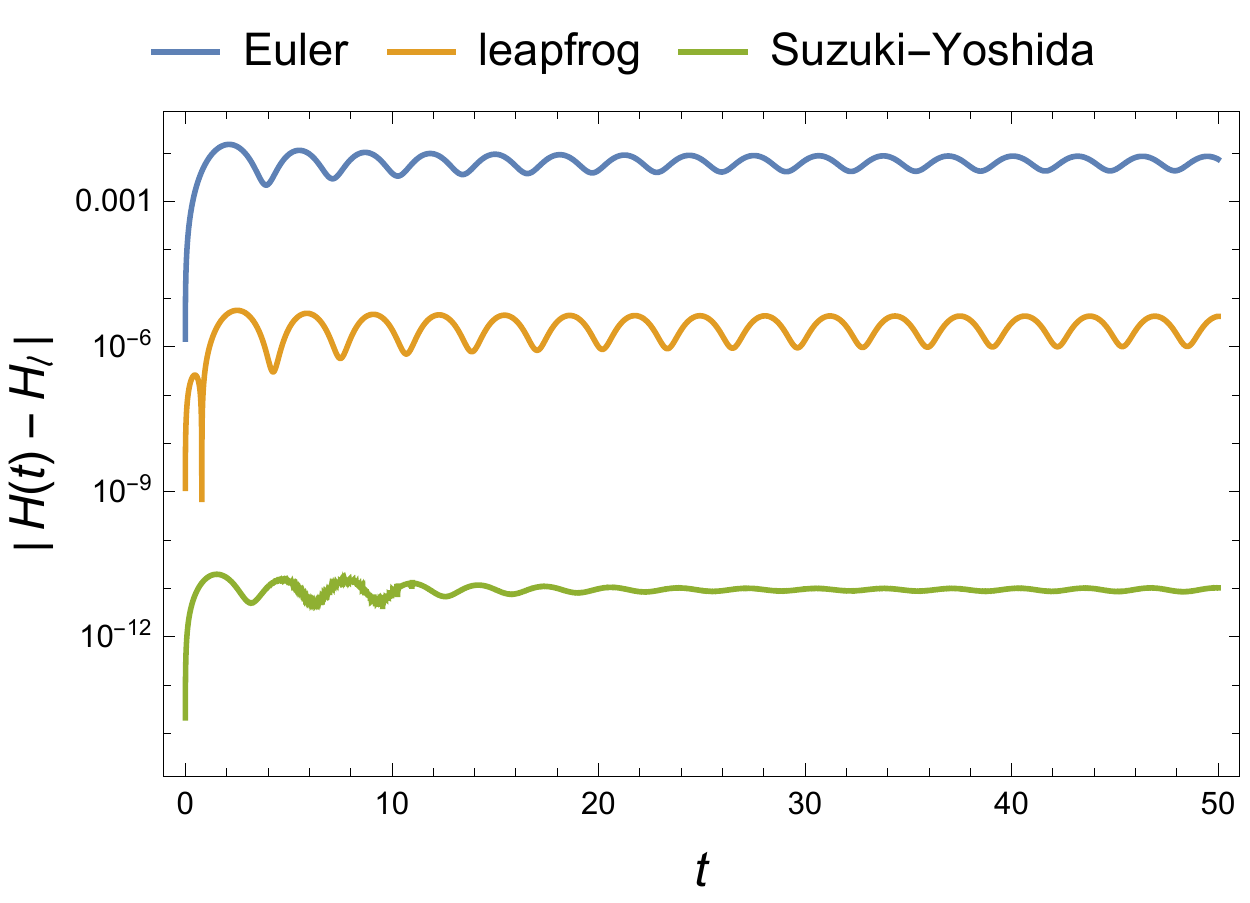}\qquad%
\includegraphics[width=.45\textwidth]{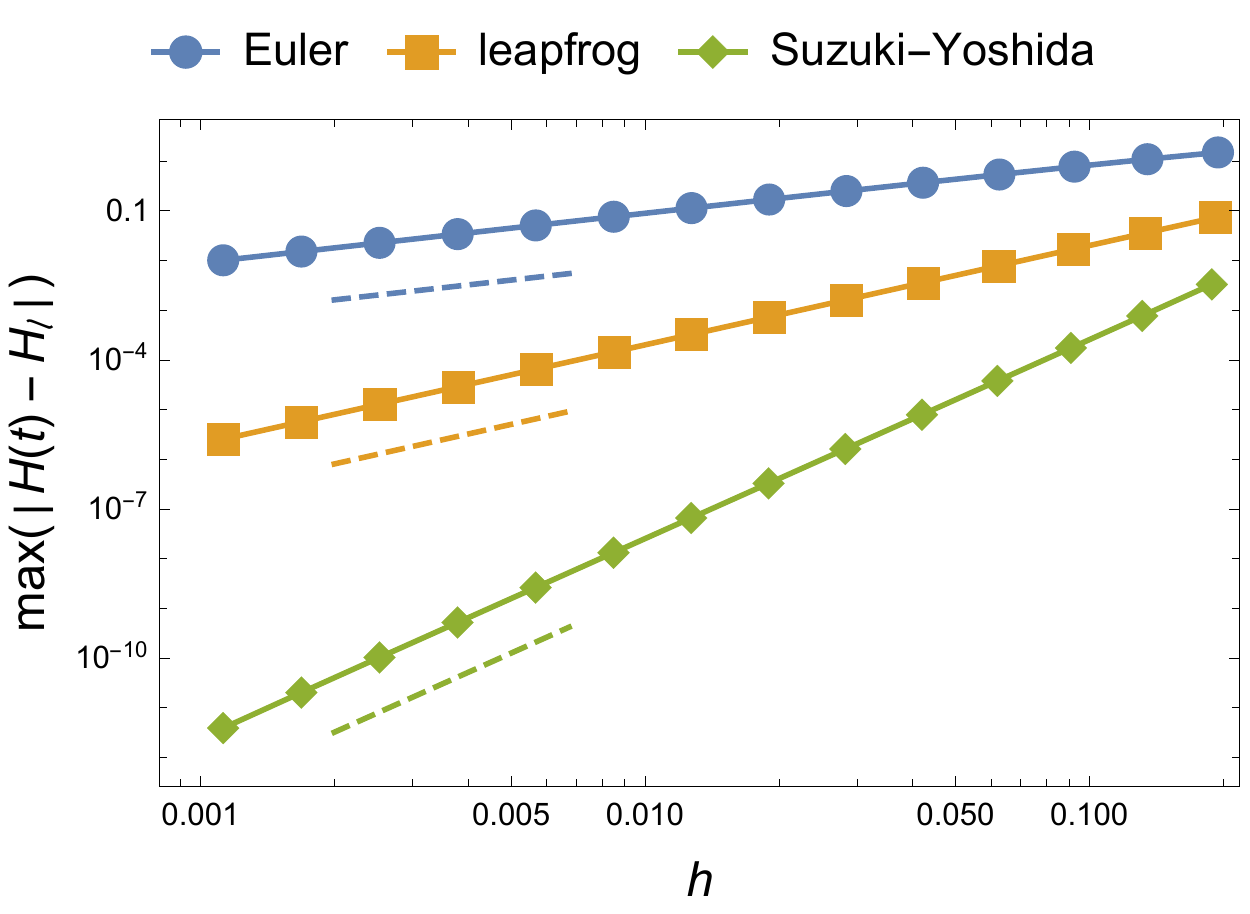}%
\caption{Error in the nonconserved Hamiltonian.
\emph{Left:} We show $|H(t_\m) - H_\m|$ (in log
scale) for $t_{\textnormal{max}} = 50$ and
$\m_{\textnormal{max}}= 3 \times 10^4$
(step size $h = t_{\textnormal{max}}/\m_{\textnormal{max}}$).
We choose $\gamma = 3$ as in
Fig.~\ref{exact_sol_dec}.
\emph{Right:} For each choice of step size, we compute
\eqref{max_error} over $t \in [0, 10]$.
The dashed lines are independent plots to verify
the errors $\bigo(h)$, $\bigo(h^2)$, and $\bigo(h^4)$, respectively,
as predicted in \eqref{ham_ext}.
\label{quad_dec_num}
}
\vspace*{1em}
\includegraphics[width=0.45\textwidth]{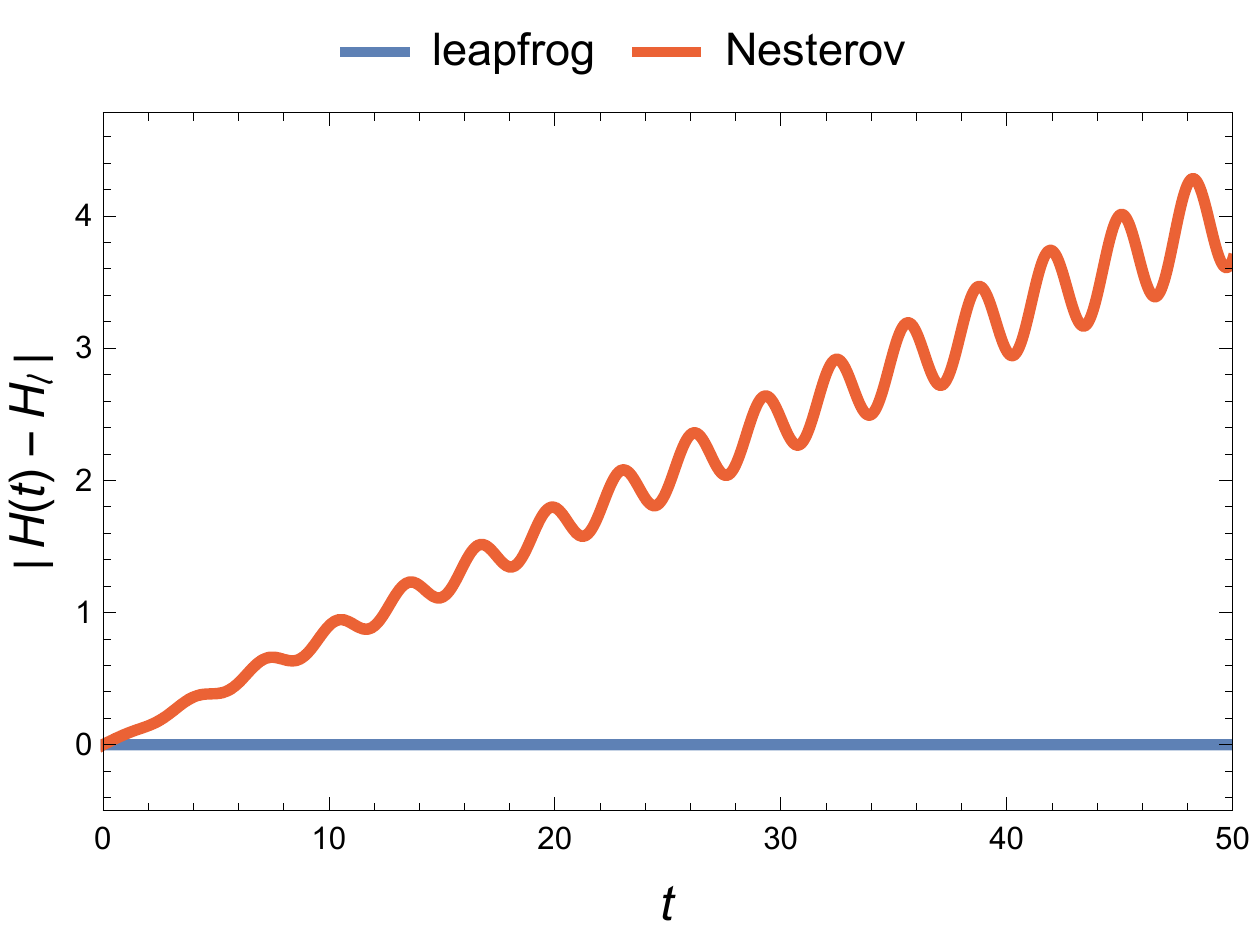}\qquad%
\includegraphics[width=0.45\textwidth]{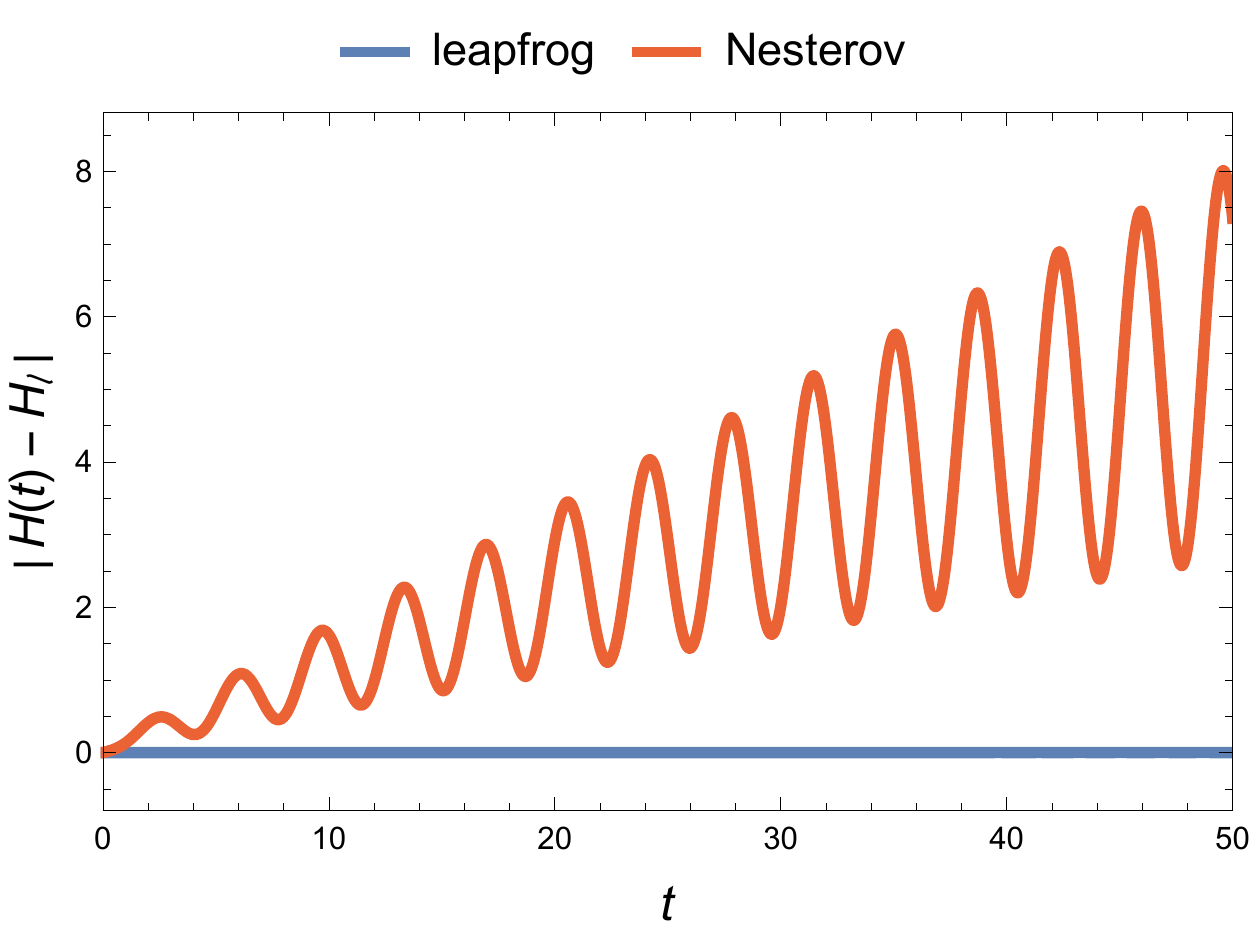}%
\caption{Contrary to our structure-preserving (presymplectic) integrators,
Nesterov's method has an unbounded error in the Hamiltonian.
\emph{Left:} We use exactly the same setting as in Fig.~\ref{quad_const_num}.
\emph{Right:} We choose $\gamma=-1$ so that we ``inject'' energy into
the system; note that a presymplectic method closely preserves---up to
a bounded error $\order(h^r)$---general time-dependent Hamiltonians
(not only for dissipative systems) in agreement with Theorem~\ref{preserv_ham_decay}.
\label{quad_nest}
}
\end{figure}

Using the same integrators as in the previous case, we
obtain the results shown in Fig.~\ref{quad_dec_num}.
Again, relation \eqref{ham_ext} is verified perfectly.
As expected, the leapfrog \eqref{quad_leap} is more stable
than the Euler method \eqref{quad_seu}, since it has order $r=2$; however,
the method based on Suzuki-Yoshida is even more stable and
accurate than both (as expected since it has order $r=4$).
Let us mention that we also implemented
their adjoint integrators as well, i.e., methods
based on \eqref{presymp_euler2}, \eqref{presymp_leapfrog2} and
\eqref{yoshida_comp}; the results
were essentially the same.

For comparison, let us consider Nesterov's method---a well-known
optimization algorithm in the literature---that corresponds to a
discretization of the Hamiltonian system \eqref{ham_class}
with $M=I$.  Nesterov's method
is provably a first-order integrator however it is not structure-preserving~\cite{Franca:2019}.
By considering the same setting as that of Fig.~\ref{quad_const_num},
we obtain the results shown in Fig.~\ref{quad_nest} (left).
Note how Nesterov's has a \emph{growing error} in the Hamiltonian, in contrast
to the presymplectic leapfrog which we include as a reference.
Moreover, in Fig.~\ref{quad_nest} (right) we
introduce \emph{excitation} in the system by choosing instead $\gamma = -1$.
Note how the presymplectic leapfrog still closely reproduces the
Hamiltonian, i.e., up to a bounded error $\order(h^r)$, in
agreement with Theorem~\ref{preserv_ham_decay} which holds not only for
dissipative systems but also for general nonconservative systems.
We also conducted numerical experiments in the case of
system \eqref{decaying_oscillator} with similar results.

\subsection{Quadratic programming}

Consider minimizing
a random quadratic function in the unconstrained case:
\begin{equation} \label{random_quad}
\min_q \left\{ f(q) \equiv \dfrac{1}{2} q \cdot M q \right\},
\end{equation}
where $M = \tfrac{1}{n} A^T A$, with $A$ an $r \times n$ matrix,
$r / n \to y$ for some $0< y \le 1$,
and where the entries $A^{ij}$ are sampled from a standard normal distribution---we
set $n=1000$ and $y = 0.8$ in our example.
Thus $M$ has rank $r$ and its eigenvalues are distributed according to
the Marchenko-Pastur law.
We solve problem \eqref{random_quad} with the presymplectic leapfrog
method \eqref{quad_leap} and compare it with
Nesterov's accelerated method.
We consider a constant damping
$\eta_1 = \eta_2 = \gamma t$ for both methods.

\begin{figure}[t]
\centering
\includegraphics[width=0.5\textwidth]{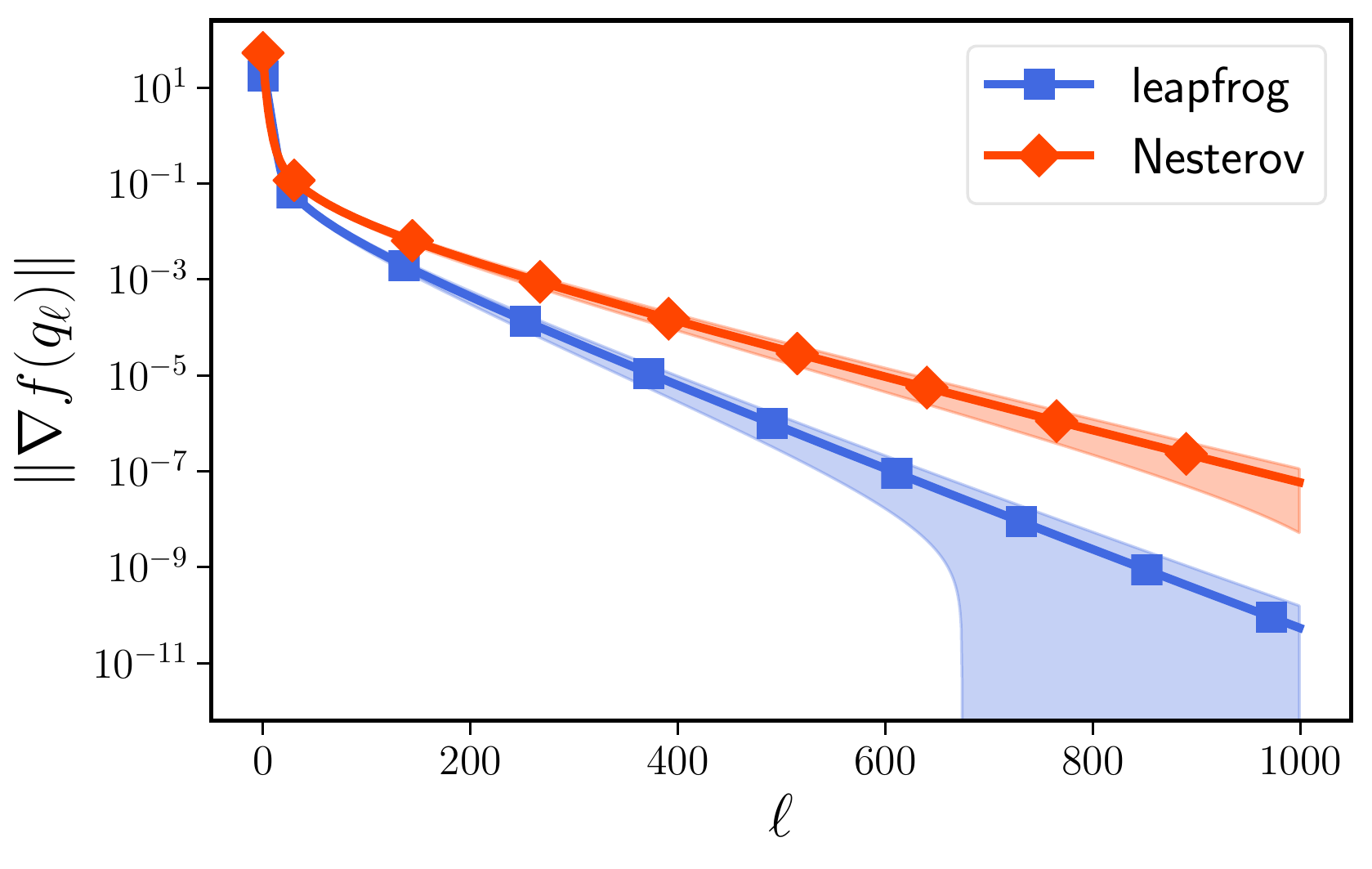}
\caption{We perform 50 Monte Carlo runs when minimizing
\eqref{random_quad} with presymplectic leapfrog \eqref{quad_leap} and
Nesterov's method. We choose $\gamma = 0.7$ for both and step size
$h=0.9$ for leapfrog whereas $h=0.5$ for Nesterov---these were
the largest values such that these
methods converged on all trials.
Solid lines are the mean of $\| \nabla f\|$ and
shaded areas $\pm \sigma$ (standard deviation).
\label{random_quad_fig}
}
\end{figure}

\begin{figure}[t]
\centering
\includegraphics[width=.7\textwidth]{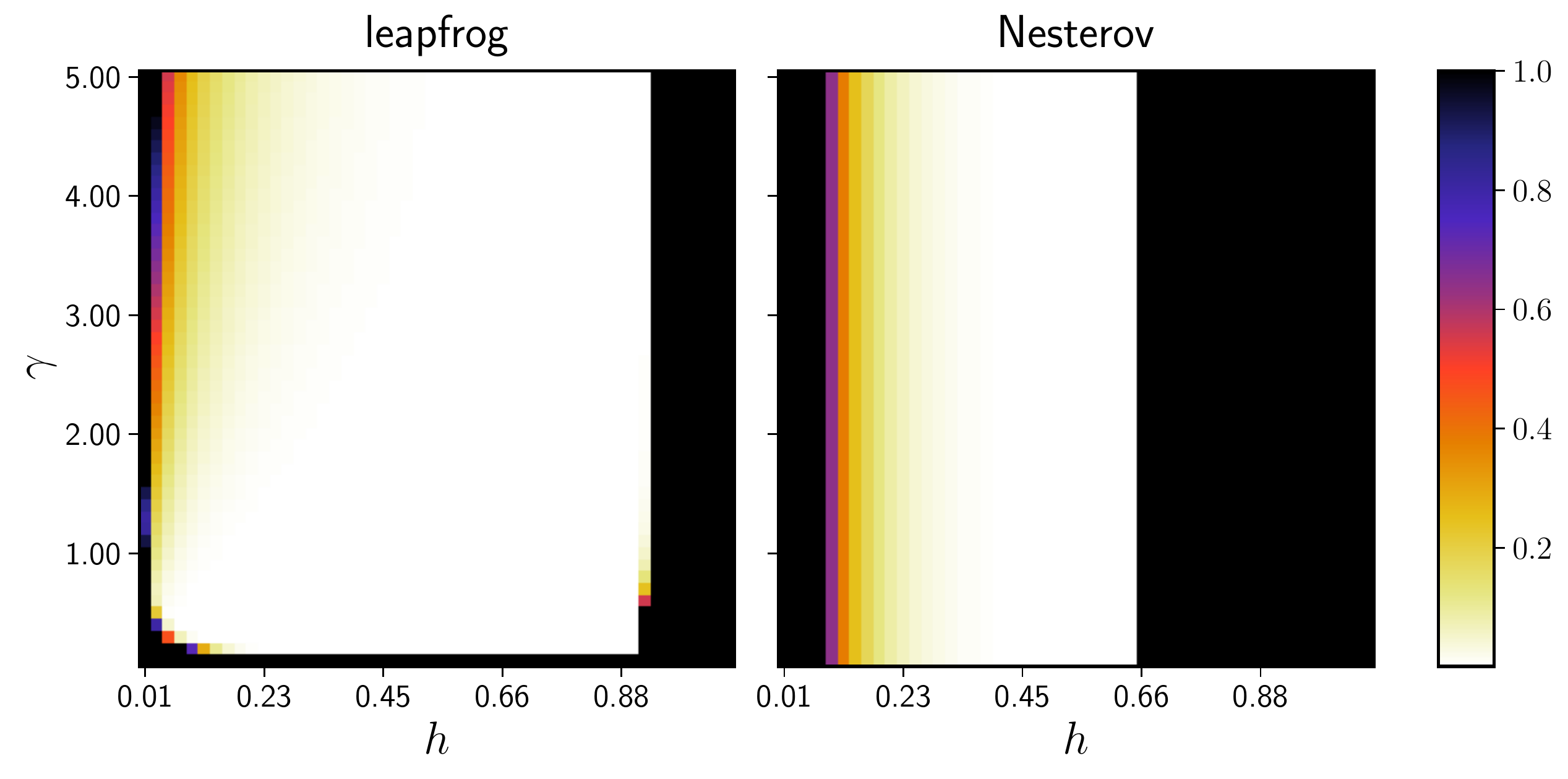}
\caption{Phase diagram of $\| \nabla f\|$ in terms of $(\gamma, h)$.
Dark color means large $\| \nabla f \|$ so that the algorithm diverged
(we truncated larger values to 1 for visualization purposes).
Note the wider area in light color for presymplectic leapfrog
which illustrates its improved stability.
\label{phase_fig}
}
\end{figure}

We thus generate 50 random functions of the kind
\eqref{random_quad} and use these
algorithms with appropriate choice of parameters $(\gamma, h)$.
Their convergence rates are illustrated in
Fig.~\ref{random_quad_fig}
where we plot the mean of $\| \nabla f(q_\m)\|$ against the iteration number
$\m$ (the shaded areas correspond to $\pm$ one standard deviation).
Note that in this case the dissipative version of the leapfrog  was faster
due to a larger choice of step size;
we stress that both methods
have similar performance, the difference is that structure-preserving
discretizations tend to be more stable and thus may accept larger
step sizes for sufficiently well-behaved problems.
To verify the stability of these methods more closely,
in Fig.~\ref{phase_fig} we also show
a $\gamma$-$h$ phase diagram of $\| \nabla f\|$ using a single sample
of the random function \eqref{random_quad};
i.e., for each choice $(\gamma, h)$
we run both algorithms for a certain maximum number of
iterations ($\m_{\textnormal{max}} = 800$) or
until a small tolerance
on $\| \nabla f\|$ is attained (we choose $10^{-3}$).
A light color means a small value of $\| \nabla f\|$ such that the algorithm
converges successfully (the lighter
the smallest value of the gradient), while a
dark color means larger values of $\| \nabla f\|$
(black means complete failure).
Note how the method based on the leapfrog is indeed
more stable since it admits a wider light color region; i.e., it converges
successfully under a wider range of step sizes.
By a closer inspection of this figure we can also
see  that Nesterov's method introduces some spurious damping since
it converges in some cases even when $\gamma = 0$.

\subsection{Learning the Ising model with Boltzmann machines} \label{rbm_sec}

To further illustrate the feasibility of our
approach to optimization, we consider a more realistic problem.
Consider the two-dimensional (ferromagnetic)
Ising model whose Hamiltonian  is given by
\be \label{ising}
H \equiv - \sum_{\langle i j \rangle} \sigma_i \sigma_j ,
\ee
where the spin variables take values $\sigma_i = \pm 1$.
Onsager (1944) solved this system
analytically. This system undergoes a second-order phase transition at
the critical temperature $T_c = 2/\log(1+\sqrt{2})  \approx 2.27$.
The canonical partition function is
\be
Z \equiv \sum_{\{ \sigma_i \}} e^{-\beta H},
\ee
with inverse temperature $\beta \equiv 1/T$, from which any thermodynamic property of the
system can be computed, including the average energy and heat capacity:
\be \label{observables}
\langle E \rangle = -\dfrac{\partial \log Z}{\partial \beta}, \qquad
\langle C \rangle = \dfrac{\partial \langle E \rangle}{\partial T} =
\dfrac{\langle E^2 \rangle - \langle E \rangle^2}{ T^2}.
\ee

We want to ``learn'' the canonical
distribution $\rho = e^{-\beta H} / Z$ using a generative
model known as \emph{restricted Boltzmann machine} (RBM)---such an
approach has recently attracted significant interest in physics
\cite{Torlai:2016,Carleo:2017,Melko:2018,Morningstar:2018}.
Briefly, an RBM is a neural network with two layers, the so-called \emph{visible} layer which
has a number of neurons equal to the dimensionality of the input data vector,
and the \emph{hidden} layer which can have an arbitrary number of neurons.
These two layers are connected with each other through parameters (``weights'')  that we collectively denote by $q$
(neurons in the same layer are not connected).
Given training data---e.g., admissible
states of the Ising model at a given temperature $T$---an RBM is trained by minimizing a ``loss function'' $f(q)$ which can be seen
as an energy function that depends on the weights of the network.
The gradient of this loss function can be approximated
via a technique called contrastive divergence \cite{Hinton:2002}, which
involves running a Markov chain for $k$ steps.  In practice, a
simple \emph{gradient descent} method
is often used in RBM's~\cite{Torlai:2016,Morningstar:2018}.
After the RBM is trained, it provides a statistical representation of the probability distribution of the data, thus allowing  us
to sample the canonical distribution and
estimate thermodynamic quantities such as \eqref{observables}.
Naturally, to train the RBM we need generate data, which can done
by simulating the Ising model with a Monte Carlo method.

\begin{figure}
\centering
\includegraphics[width=.47\textwidth]{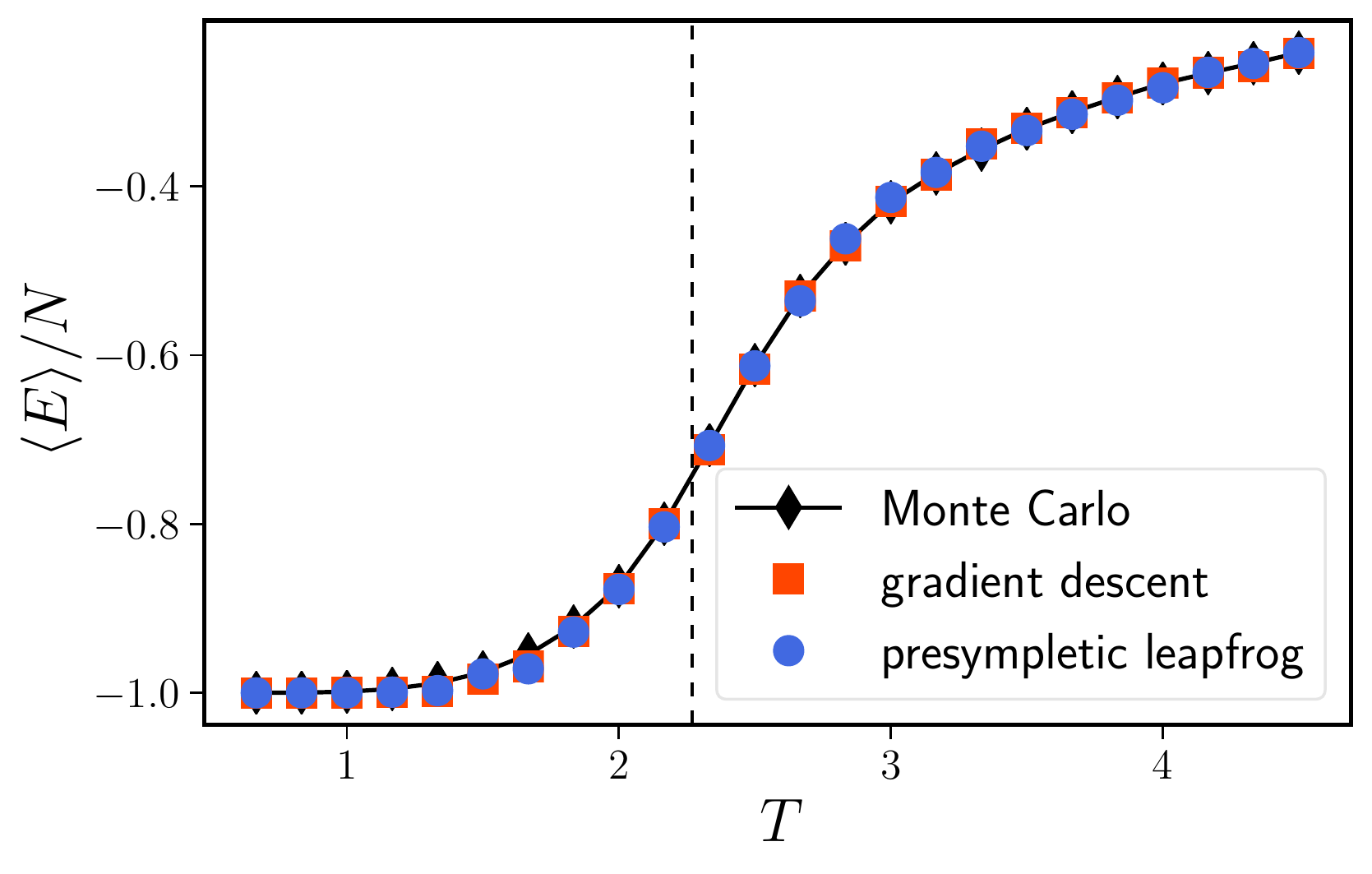}~~~~%
\includegraphics[width=.47\textwidth]{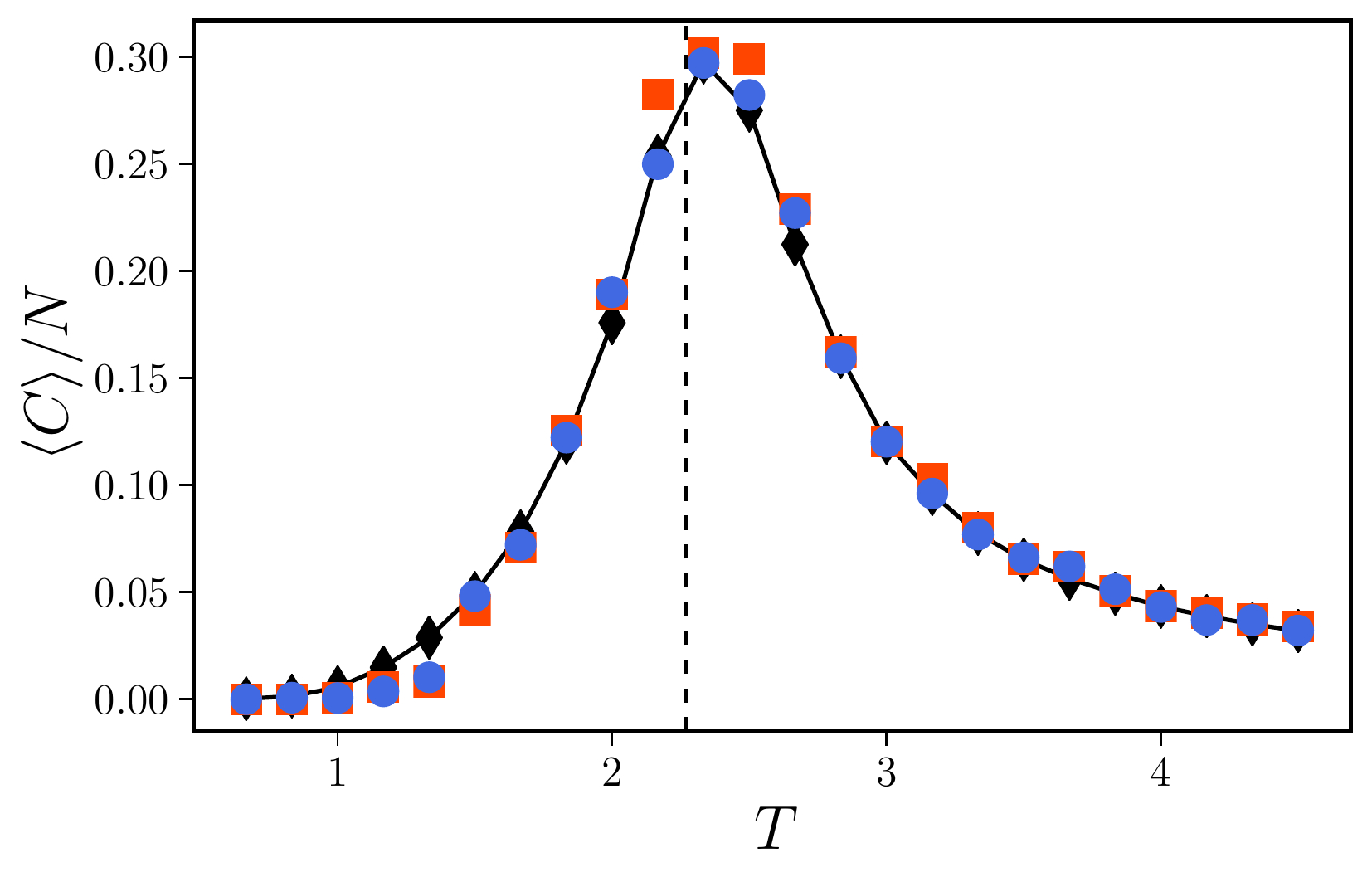}
\caption{Learning a 2D Ising model ($8\times 8$ lattice) with a restricted
Boltzmann machine (RBM) trained with
gradient descent (standard approach)
and the presymplectic leapfrog method \eqref{quad_leap}.
The RBM was trained with only
$2\times 10^3$ data points. We include a comparison with the
Metropolis Monte Carlo estimates to the average energy and heat capacity
\eqref{observables} using $10^5$ configurations.
The vertical dashed line indicates the theoretical value of the critical
temperature $T_c \approx 2.27$.}
\label{ising_plot}
\end{figure}

Following \cite{Torlai:2016,Morningstar:2018}, we consider the Ising model
on a lattice of size
$N = 8 \times 8$ and periodic boundary conditions.
We generate $10^5$ Ising configurations using a standard Monte Carlo
simulation  with the Metropolis algorithm; we use $N^3$ equilibration steps
for several values of  the temperature in the range $T \in [0.5, 4.5]$.
Then, for each temperature $T$, we feed $2 \times 10^3$ random
samples of these states
to the RBM which is optimized with gradient descent
and the presymplectic leapfrog method \eqref{quad_leap} (with $M = I$
for simplicity). For gradient descent we use a step size $h = 5\times 10^{-3}$
while for the leapfrog we let $h = 2.5\times 10^{-2}$ and set the
damping coefficient $\gamma$ according to
$\mu \equiv e^{-\gamma h /2}$ with $\mu = 0.98$ (these values were chosen via a rough grid search).  To compute (stochastic)
gradients we use a ``batch size'' of $100$ and run these algorithms for
$2 \times 10^3$ iterations.
The number of neurons in the hidden layer of the RBM is set to
be $400$ and  we use only one step $k=1$ in the contrastive gradient computation (more
steps yield better results but are computationally demanding).
The RBM estimates for the energy and specific heat
are reported in Fig.~\ref{ising_plot}.\footnote{These results
can be compared with
\cite{Torlai:2016,Morningstar:2018} where more extensive (and expensive)
experiments were performed for this same problem.
}
Note that the presymplectic leapfrog \eqref{quad_leap} yields
improvements over gradient descent.  Indeed, to illustrate its faster
convergence,  in Fig.~\ref{convergence} we show a plot of the loss function
during training for  a single temperature; analogous results
hold for other values. We also tested
Nesterov's method on this problem and the results were essentially the
same as the ones reported for the presymplectic leapfrog.

\begin{figure}
\centering
\includegraphics[width=.5\textwidth]{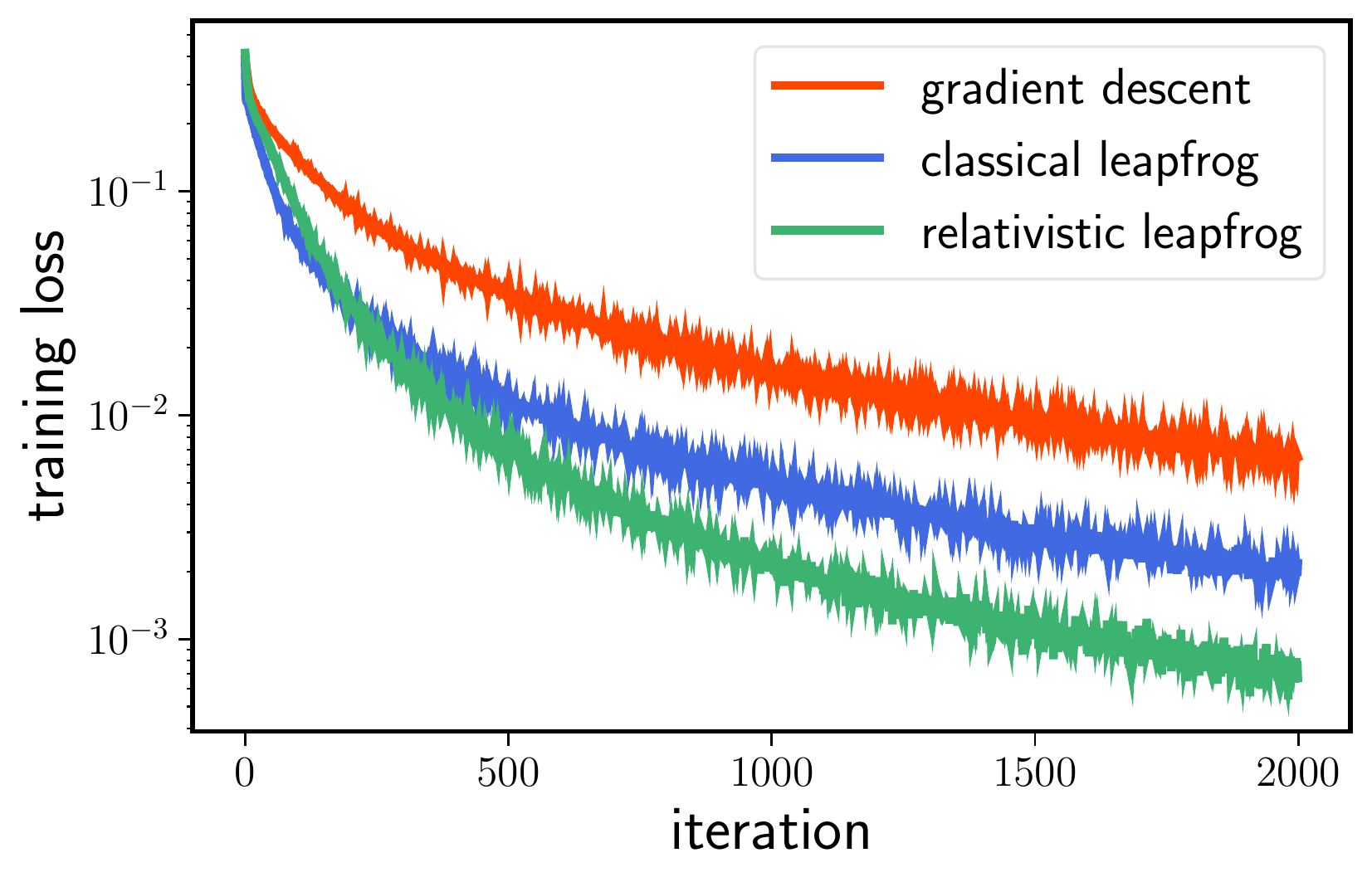}
\caption{Convergence rate in training an RBM with
gradient descent and the presymplectic leapfrog method \eqref{quad_leap}
for one data point ($T\approx 2.83$) of Fig.~\ref{ising_plot}. We also include
results for a method obtained from a dissipative relativistic system
\eqref{rel_ham}.}
\label{convergence}
\end{figure}

Finally, to illustrate that different physical systems can also lead to
feasible optimization methods, we consider the presymplectic
leapfrog \eqref{presymp_leapfrog2} with a relativistic kinetic energy.
That is, we consider the following Hamiltonian
(see \ref{generalized_conformal} and \cite{Franca:2019}):
\be \label{rel_ham}
H = e^{\gamma t}\sqrt{e^{-2 \gamma t} c^2 p^j p_j + m^2 c^4 } +
e^{\gamma t} f(q).
\ee
We set $m = 1$,  $h = 2.8 \times 10^{-2}$, $\mu = e^{-\gamma h/2} = 0.99$
and we fix the
speed of light to $c=5$---in the algorithm $c$ can be considered a free
parameter that controls the kinetic energy. In Fig.~\ref{convergence} we see
that this method has faster convergence compared to the classical system. (We also considered the
entire curves of Fig.~\ref{ising_plot}
but the results were not significantly different from the classical case.)

\section{Discussion} \label{conclusion}

We have introduced \emph{``presymplectic integrators''} as a class of discretizations that are suitable for simulating explicitly time-dependent or nonconservative---and in particular dissipative---Hamiltonian systems.  This framework accommodates a large class of dissipative dynamical systems that are appropriate for applications in optimization and machine learning.  We have also shown that, besides preserving the underlying \emph{presymplectic geometry} of nonconservative Hamiltonian systems, these methods nearly preserve the Hamiltonian and exhibit long-term stability despite the absence of a conservation law; see Theorem~\ref{preserv_ham_decay}. This extends into nonconservative settings the most important property of symplectic integrators which are restricted to conservative systems.
Thus, our approach and theoretical conclusions are applicable to a variety of scientific disciplines where the simulation of nonconservative or dissipative systems play an important role, such as nonequilibrium statistical physics, thermodynamics of open systems, complex and nonlinear systems, economics, and so forth.

Focusing on optimization, we showed that, for a general class of dissipative systems arising from a Hamiltonian in the form \eqref{gen_ham}, presymplectic integrators are able to preserve the continuous-time rates of convergence  up to a negligible error, as long as certain conditions are satisfied; see Theorem~\ref{main_theorem}.
This  provides a systematic and first-principles approach to deriving ``rate-matching'' discretizations,
thereby obviating the need for a discrete-time convergence analysis.\footnote{Of course, if one wants to study fine details of a specific algorithm's performance, a discrete analysis, together with an appropriate backward error analysis, may be necessary.}
As a concrete example, we considered the Bregman Hamiltonian, providing general methods from which specific optimization algorithms can be derived, in both the separable and the nonseparable case.
Our theoretical  conclusions and the feasibility of this approach
were also well-supported by numerical experiments.

We comment on some problems that might deserve further study.
First, we showed that condition \eqref{eta_bound} is essential and in
particular leads to the damping strategies
\eqref{eta1_large} and \eqref{eta1_small} related to the heavy ball and
Nesterov's method, respectively.  We also argued that a more elaborate
choice \eqref{damp_between} may be beneficial. Thus, finding the \emph{optimal
damping} for a given class of problems and relating to the amount of energy
being dissipated seems an interesting  problem.
More generally, finding physical systems that may provide a faster convergence
is interesting---we considered the relativistic kinetic energy besides
the classical one but other choices may be possible.
Second, it would be appealing to find numerical schemes with a better
global error---see equation \eqref{lipschitz_flow}---or obtain improved
bounds for existing methods, since this would automatically relax the
condition \eqref{eta_bound}.
Finally, our geometric construction is fully general and one can also
consider the simulation of dissipative systems on arbitrary (smooth) curved
manifolds. Thus, our framework can also be used to solve optimization
problems over a Riemannian manifold by including the
appropriate metric in the kinetic part of the Hamiltonian.

\bigskip

\subsubsection*{Acknowledgements}
\vspace{-1em}
We wish to thank Jelena Diakonikolas and Michael Muehlebach for helpful discussions.  This work was supported by grant ARO MURI W911NF-17-1-0304.

\bigskip

\appendix

\section{Background on differential geometry} \label{diff_geo_sec}

In this section we provide a brief overview of those elements of differential geometry that we need for our results. For a fuller presentation we refer to any of the excellent textbooks on the subject (e.g. \cite{Arnold,Marsden,Berndt,Aebischer,Sternberg}).

Let $\M \equiv \M^{\n}$ be a smooth $\n$-dimensional manifold. Let $p \in \M$ and $(\R, x)$ be a \emph{chart} so that $p$ can be assigned local coordinates,
$x(p)\equiv (x^1(p),\dotsc,x^\n(p) )$ in $\mathbb{R}^\n$. The coordinates $x$ and the point $p$ are used interchangeably, and we often refer to the former to indicate a point on the manifold. To each $x\in \M$ there is an associated vector space $T_x \M$ called the \emph{tangent space}. The coordinates $x$ induces a basis $\partial_1,\dotsc,\partial_\n$ in $T_x\M$, where $\partial_j \equiv \tfrac{\partial}{\partial x^j}$. Thus, $V\vert_x = V^j(x) \partial_j$ is a representation of
the vector $V\vert_x \in T_x \M$.\footnote{ \label{einstein_notation}
We use Einstein's summation convention throughout, where a pair of upper and lower indices are summed over; e.g.
$X^j \partial_j  \equiv \sum_{j=1}^\n X^j \partial_j$, $\alpha_{jk} T^{\ell jk} \equiv \sum_{j=1}^{\n}\sum_{k=1}^\n \alpha_{jk} T^{\ell jk}$, etc. Upper indices denote components of vectors---also called contravariant vectors---in $T_x\M$, while lower indices denote components of dual vectors---also called covectors---which belong to the cotangent space $T_x^*\M$.
}
The collection of all tangent vectors of $\M$ form the \emph{tangent bundle} $T \M$. One can then represent a (contravariant) vector field by the differential operator
\be \label{vector_field}
  V(x ) \equiv V^j(x) \partial_j,
\ee
which can be seen as a cross section of the tangent bundle $T\M$.
For a given $x$, the vector field $V$ assigns a single vector, $V(x) = V\vert_x$.
Note that a point in $T\M$ has $2\n$ coordinates, $x^1,\dotsc,x^\n, V^1,\dotsc,V^\n$.

To each $V\vert_x $ there is an associated dual vector, $\alpha\vert_x : T_x M \to \mathbb{R}$; i.e. $\alpha$ is a linear functional. Dual vectors are called covectors or 1-forms and they live on the \emph{cotangent space} denoted by $T_x^* \M$, which is isomorphic to $T_x\M$. The collection of all 1-forms at every point of $\M$ forms the \emph{cotangent bundle} $T^* \M$. The coordinate basis $x$ induces a dual basis  $dx^1,\dotsc,dx^\n$ in $T^*_x \M$, defined by $dx^j(\partial_k) = \partial_k(dx^j) = \delta^j_k $, where $\delta$ is the Kronecker delta. Similarly to \eqref{vector_field} one can now represent a 1-form  field $\alpha  \in T^*\M$ as
\be \label{dual_vector_field}
\alpha(x) = \alpha_j(x) dx^j,
\ee
which is a cross section of the cotangent bundle $T^*\M$.
The action of dual vectors is thus $\alpha( V )\vert_x = V(\alpha)\vert_x = \alpha_j(x) V^j(x)$.
Note that $T^* \M$ is a $2\n$-dimensional space where a point has coordinates
$x^1,\dotsc,x^\n,\alpha_1,\dotsc,\alpha_\n$. The cotangent bundle $T^*\M$ is very special since it can be shown that it is itself a symplectic manifold (see Theorem~\ref{cotangent_symplectic}).

A general \emph{tensor} $T$ of rank $(p,q)$ is a multilinear map
$T: \bigotimes^p T_x^* \M \bigotimes^q T_x \M \to \mathbb{R}$. In a coordinate basis it is written as
\be
  T = \tensor{T}{^{j_1} ^\dotsc ^{j_p} _{k_1} _\dotsc _{k_q}} \partial_{j_1} \dotsm \partial_{j_p} dx^{k_1} \dotsm dx^{k_q}.
\ee
In calculations, it is useful to focus on the
components $\tensor{T}{^{j_1} ^\dotsc ^{j_p} _{k_1} _\dotsc _{k_q}}$  alone and omit the basis altogether. A \emph{contravariant tensor} is a tensor of rank $(p, 0)$, for some $p\ge 1$, and  a \emph{covariant tensor} is a tensor of rank $(0,q)$, for some $q \ge 1$.
A \emph{$q$-form}  is a $(0,q)$-tensor which is totally \emph{antisymmetric} in its indices. In a basis it is denoted as
\be
  \alpha  = \dfrac{1}{q!} \alpha_{j_1 \dotsc j_q} dx^{j_1} \wedge \dotsm \wedge dx^{j_q},
\ee
where $\wedge$ denotes the \emph{exterior product}.\footnote{For two 1-forms $\alpha, \beta \in T_x^*\M$ we have
$(\alpha\wedge \beta)(v,w) \equiv \alpha(v)\beta(w) - \beta(v)\alpha(w)$, for $v,w \in T_x\M$.} Given another $p$-form $\beta$, one can compose the $(p+q)$-form $\alpha\wedge\beta$ which obeys
\be
  \alpha \wedge \beta= (-1)^{pq} \, \beta \wedge \alpha.
\ee
It is useful to introduce a notation to denote the space of $q$-forms at $x$, namely $\bigwedge^q T_x^*\M$, and as before we obtain a bundle $\bigwedge^q T^*\M$ of $q$-form fields by allowing the coefficient $\omega_{j_1 \dotsc j_q}(x)$ to depend on $x$.

Another important operation is the \emph{exterior derivative}:
\be
d : \bigwedge^{q} T^*\M \to \bigwedge^{q+1} T^*\M,
\ee
which can be defined componentwise as
\be
  d\alpha(x)  \equiv \dfrac{1}{q!} \dfrac{\partial \alpha_{j_1\dotsc j_q}(x)}{\partial x^k} dx^k \wedge dx^{j_1} \wedge \dotsm \wedge dx^{j_q} .
\ee
This is a linear operation, $d(c_1 \alpha + c_2 \beta) = c_1 d\alpha+ c_2 d \beta$,
for any forms $\alpha$ and $\beta$.
Its main properties are given by the equation
\be\label{wedge_prop1}
d(\alpha \wedge \beta) = (d\alpha) \wedge \beta + (-1)^q \alpha\wedge (d\beta),
\ee
and by the identity
\be \label{poinc_lemma}
d^2 = d \circ d = 0.
\ee
A differential form $\omega$ is said to be \emph{closed} if $d \omega = 0$. A differential $q$-form $\omega$ is said to be \emph{exact} if $\omega = d \lambda$ for some  $(q-1)$-form $\lambda$.  Trivially, from \eqref{poinc_lemma} every exact form is closed.  The \emph{Poincar\' e lemma} ensures the converse, namely that
every closed form is also exact.\footnote{This holds for contractible manifolds, which is the case for smooth manifolds as considered in this paper.}

Given a vector $v \in T_x \M$ and a $q$-form $\alpha \in \bigwedge^q T_x^*\M$, the \emph{interior product} $i_v \alpha$ is a $(q-1)$-form defined by
\be
(i_v\alpha)(v_2,\dotsc,v_q) \equiv \alpha(v, v_2,\dotsc,v_q).
\ee
In components, this is
simply the contraction $(i_v \alpha)_{j_2 \dotsc j_q} = v^{k} \alpha_{k j_2\dotsc j_q}$. The interior product is also linear and satisfies an analogous relation to \eqref{wedge_prop1}:
\be
i_v(\alpha\wedge \beta) = (i_v\alpha)\wedge \beta + (-1)^q \alpha \wedge (i_v \beta).
\ee

Since one can only operate on elements of the same vector space, it is necessary to introduce a mapping that makes it possible to move  geometric objects over the manifold. In particular, given a function
$F : \M \to \N$
between two manifolds $\M$ and $\N$,
for any function $g : \N \to \mathbb{R}$
one defines the \emph{pushforward}---also called the differential---of $v \in T_x \M$ to be the vector $F_* v \in  T_{F(x)}\N$ defined by the operation
\be \label{push_forward}
(F_* v)(g) \equiv v(g\circ F) .
\ee
The \emph{pullback} goes in the opposite direction, i.e., given a $q$-form $\alpha \in T^*_{F(x)} \N$ we obtain the $q$-form $F^* \alpha \in T_x^* \M$ through
\be \label{pullback}
F^* \alpha(v_1,\dotsc,v_q) \equiv \alpha(F_* v_1, \dotsc, F_* v_q),
\ee
for $v_j \in T_x \M$, $j=1,\dotsc,q$. The pullback allows us to move $q$-forms over the manifold. By introducing the concept of a flow, $\flow_t: \M \to \M$ induced by a vector field $X$ of $T\M$, i.e., $\flow_t = e^{t X}$, one can define the  \emph{Lie derivative} of a $q$-form as in \eqref{LieDer}.
A flow $\flow_t$ is  a diffeomorphism and thus $\flow_t^* = (\flow_{-t})_{*}$. In this case, one can consider the pullback of not only  $q$-forms but arbitrary $(p,q)$-tensors. Thus,  the Lie derivative of a general tensor field $T$ can be defined by
\be \label{lie_tensor}
\Lie_{X} T\vert_{x} \equiv \left.\dfrac{d}{dt}\right\vert_{t=0} \flow^*_t T\vert_{\flow_t(x)} .
\ee
Given a differentiable form $\omega$ and a vector field $X$, we say that $X$ preserves $\omega$ if and only if
\be
\Lie_X \omega = 0.
\ee
From the definition \eqref{lie_tensor} this implies
\be
\flow_t^* \omega = \omega.
\ee
In this sense,
any map $\phi : \M \to \N$ is said to be \emph{canonical}  if it preserves
$\omega$, i.e. $\phi^* \omega = \omega$.   In the case of Hamiltonian systems $\omega$ is the symplectic 2-form, and  Hamiltonian flows generate canonical transformations, which are symplectomorphisms.

A very useful formula is Cartan's magic formula:
\be
\Lie_X \alpha = d \circ i_X \alpha  + i_X \circ d \alpha
\ee
for any differentiable form $\alpha$. Some other useful formulas are
\begin{align}
  \Lie_X(\alpha\wedge \beta) &= (\Lie_X \alpha) \wedge \beta + \alpha \wedge (\Lie_X\beta), \\
  \Lie_{[X,Y]}\alpha &= 
  [\Lie_X,\Lie_Y]\alpha, \\
  \Lie_X \circ d &= d\circ \Lie_x, \\
  i_{[X,Y]} &= \Lie_X\circ i_Y - i_Y\circ\Lie_X .
\end{align}
Here, $[\Lie_X,\Lie_Y] \equiv \Lie_X \Lie_Y - \Lie_Y \Lie_X$ is the \emph{Lie bracket}. The same
definition holds for $[X,Y] = XY  - YX$, and in this case one refers to $[\cdot,\cdot]$ as the \emph{commutator} of two vector fields.

\section{Generalized conformal Hamiltonian systems} \label{generalized_conformal}

Conformal Hamiltonian systems \cite{McLachlan:2001} provide an alternative approach to introducing dissipation into Hamiltonian
systems. In this approach, one modifies Hamilton's equations directly by adding a linear term in the momentum.
It is easy to construct structure-preserving discretizations for these systems since one can split the system into conservative and dissipative parts, then apply a standard symplectic integrator to the former, while integrating the latter exactly.
This approach has been recently explored in optimization \cite{Franca:2019}.
The purpose of this section is to show that a---nonautonomous---generalization of conformal Hamiltonian systems correspond to a particular case of the explicit time-dependent Hamiltonian formalism introduced in section~\ref{ham_sys_sec}.  As a consequence, one can construct (generalized) conformal symplectic integrators from presymplectic integrators (see Definition~\ref{presymp_int}).

Consider a time-independent Hamiltonian, $H=H(q,p)$, and assume a modified form of Hamilton's equations given by
\be \label{gen_conf}
\dfrac{dq^j}{dt} = \dfrac{\partial H}{\partial p_j}, \qquad
\dfrac{dp_j}{dt} = - \dfrac{\partial H}{\partial q^j} - \gamma(t) p_j . 
\ee
The conformal case \cite{McLachlan:2001} assumes that the damping coefficient $\gamma(t) = \gamma$ is constant. In this formulation, the Hamiltonian vector field is
\be
X_H = \dfrac{\partial H}{\partial q^i} \dfrac{\partial}{\partial q^i} + \dfrac{\partial H}{\partial p_i} \dfrac{\partial}{\partial p_i},
\ee
and by the same geometric approach previously discussed one can see that the equations of motion \eqref{gen_conf} are equivalent to
\be
i_{X_H}(\omega) = - dH - \gamma(t) \lambda,
\ee
with $\omega$ defined in \eqref{symp} and where $\lambda \equiv p_j dq^j$ is the Liouville-Poincar\' e 1-form.  From Cartan's formula \eqref{cartan} we conclude that the symplectic structure contracts as
\be \label{omega_conf}
\Lie_{X_H} \omega = - \gamma(t) \omega,
\ee
or equivalently\footnote{From this one concludes that the phase-space volumes contract as $\Lie_{X_H}\vol^{2\n} = -\n \gamma(t) \vol^{2\n}$, so that
$\flow_t^* \vol^{2n} = e^{- n C(t)} \vol^{2n}$. This is a dissipative version of Liouville's theorem;
note the dimension dependency.}
\be
\flow_t^* \omega = e^{-\eta(t)} \omega, \qquad \eta(t) \equiv \int^t \gamma(t') dt'.
\ee
Finally, in this setting $H$ is the energy of the system and it dissipates as
\be \label{energy_conf}
  \dfrac{dH}{dt} = - \gamma(t) \dfrac{\partial H}{ \partial p_j } p_j.
\ee

We now show that generalized conformal Hamiltonian systems corresponds to a particular case of the time-dependent Hamiltonian formalism.
Define the time-dependent Hamiltonian $K$ as
\be \label{new_ham_time}
  K(t, Q,P) \equiv e^{\eta(t)} H\big(Q, e^{-\eta(t)} P\big), \qquad Q \equiv q, \qquad P \equiv e^{\eta(t)} p,
\ee
where $H$ is the original Hamiltonian of \eqref{gen_conf}. Now the
standard Hamilton's equations  yield
\be
  \dfrac{dQ^j}{dt} = \dfrac{\partial K}{\partial P_j} = \dfrac{\partial H(q,p)}{\partial p_j}, \qquad
  \dfrac{dP_j}{dt} = -\dfrac{\partial K}{\partial Q^j} = - e^{\eta(t)} \dfrac{\partial H(q,p)}{\partial q^j},
\ee
which when written in terms of $(q,p)$ are precisely \eqref{gen_conf}.
Therefore, one can  construct discretizations that preserve the contraction of the symplectic form \eqref{omega_conf}
through presymplectic integrators (see Definition~\ref{presymp_int}) with the explicit time-dependent Hamiltonian $K$.

\section{Constructing presymplectic integrators} \label{constructing_symp}

As stated in Definition~\ref{presymp_int}, a presymplectic integrator is a reduction of a higher-dimensional symplectic integrator under the gauge fixing \eqref{q0} and \eqref{p0}. To follow this prescription one must perform three simple steps:
\begin{enumerate}
\item Choose a \emph{symplectic integrator}---for a conservative Hamiltonian system---and apply it to the Hamiltonian system \eqref{ham_big}. This results into updates for $(q^0, p_0)$ and for the spatial components $(q^j, p_j)$;
\item Set $q^0 = t$ and ignore $p_0$ completely---$p_0$ is just the actual value of the Hamiltonian as a function of time and does not participate in the dynamics, neither in the numerical procedure;
\item Set $s = t$.
\end{enumerate}
While these formal steps make clear that we are respecting the symplectification prescription previously discussed, in practice these three steps can be
reduced to the following:
\begin{itemize}
\item Apply any ``symplectic integrator'' to the time-dependent
Hamiltonian $H(t,q,p)$ in the ``natural way.'' By this we mean to simply
include additional updates for the time variable $t$ with the same
rule as the coordinates $q$.
\end{itemize}
We will provide some explicit examples below that should make this
 clear.

\subsection{Presymplectic Euler}
For a conservative Hamiltonian system one has the following version of the
symplectic Euler method which is order $r=1$ \cite{Hairer} ($\ell = 0,1,\dotsc$ is the iteration number and $h$ the step size):
\be
\label{symp_euler}
\begin{split}
p_{\m+1} &= p_\m - h \nabla_q H(q_{\m}, p_{\m+1}), \\
q_{\m+1} &= q_{\m} + h\nabla_p H(q_{\m}, p_{\m+1}).
\end{split}
\ee
Considering a time-dependent Hamiltonian $H(t, q, p)$,
since $t$ must be treated in the same way as the coordinates $q$
according to the above
discussion, we immediately obtain the
\emph{presymplectic Euler} method given by
\be
\label{presymp_euler}
\begin{split}
p_{\m+1} &= p_\m - h \nabla_q H(t_\m, q_\m, p_{\m+1}), \\
t_{\m+1} &= t_\m + h, \\
q_{\m+1} &= q_\m + h\nabla_p H(t_\m, q_\m, p_{\m+1}). \\
\end{split}
\ee
Note how we have simply added an update for $t$ following the same structure as the update for $q$.
There is also the adjoint
of \eqref{symp_euler} given by \cite{Hairer}
\be
\label{symp_euler2}
\begin{split}
q_{\m+1} &= q_\m + h\nabla_p H(q_{\m+1}, p_\m), \\
p_{\m+1} &= p_\m - h \nabla_q H(q_{\m+1}, p_{\m}).
\end{split}
\ee
From this we obtain an alternative to \eqref{presymp_euler}
which is
\be \label{presymp_euler2}
\begin{split}
t_{\m+1} &= t_\m + h , \\
q_{\m+1} &= q_\m + h\nabla_p H(t_{\m+1}, q_{\m+1}, p_{\m}) , \\
p_{\m+1} &= p_\m - h \nabla_q H(t_{\m+1}, q_{\m+1}, p_{\m}) . \\
\end{split}
\ee
Both of these methods, namely \eqref{presymp_euler} and \eqref{presymp_euler2}, are of order $r=1$.
Note that for an arbitrary Hamiltonian $H$ in general they are implicit, i.e. nonlinear
equations have to be solved to obtain either $q_{\m+1}$ or $p_{\m+1}$.
However, when the Hamiltonian is separable in the form
\be \label{separable}
H = T(t,p) + V(t,q)
\ee
these methods become completely explicit in all variables resulting in cheap implementations.

\subsection{Presymplectic leapfrog}
One of the versions of the
leapfrog method for a conservative Hamiltonian is given by \cite{Hairer}
\begin{equation}
\label{leapfrog}
\begin{split}
p_{\m + 1/2} &= p_\m - (h/2) \nabla_q H(q_\m, p_{\m+1/2}), \\
q_{\m+1} &= q_\m + (h/2)\left(\nabla_p H(q_\m, p_{\m+1/2}) + \nabla_p H(q_{\m+1}, p_{\m+1/2})\right), \\
p_{\m + 1} &= p_{\m+1/2} - (h/2) \nabla_q H(q_{\m+1}, p_{\m+1/2}).
\end{split}
\end{equation}
This method is obtained by composing \eqref{symp_euler} and
\eqref{symp_euler2}, each with step size $h/2$ and in this order.
This method is symmetric and thus of order $r=2$.
For a time-dependent Hamiltonian $H(t,q,p)$ we include appropriate
updates for time and obtain the following \emph{presymplectic leapfrog}
method:
\be
\label{presymp_leapfrog}
\begin{split}
p_{\m + 1/2} &= p_\m - (h/2) \nabla_q H(t_\m, q_\m, p_{\m+1/2}), \\
t_{\m+1} &= t_\m + h, \\
q_{\m+1} &= q_\m + (h/2)\left(\nabla_p H(t_\m, q_\m, p_{\m+1/2}) + \nabla_p H(t_{\m+1}, q_{\m+1}, p_{\m+1/2})\right), \\
p_{\m + 1} &= p_{\m+1/2} - (h/2) \nabla_q H(t_{\m+1}, q_{\m+1}, p_{\m+1/2}).
\end{split}
\ee
There is also the adjoint version of the leapfrog which
is obtained by composing \eqref{symp_euler2} followed
by \eqref{symp_euler} instead, i.e. \cite{Hairer}
\be
\label{leapfrog2}
\begin{split}
q_{\m+1/2} &= q_\m + (h/2) \nabla_p H(q_{\m+1/2}, p_\m), \\
p_{\m+1} &= p_\m - (h/2) \big( \nabla_q H(q_{\m+1/2}, p_\m) + \nabla_q H(q_{\m+1/2}, p_{\m+1}) \big), \\
q_{\m+1} &= q_{\m+1/2} + (h/2) \nabla_p H(q_{\m+1/2}, p_{\m+1}).
\end{split}
\ee
This leads to an alternative version of the presymplectic leapfrog given by
\begin{equation}
\label{presymp_leapfrog2}
\begin{split}
t_{\m+1/2} &= t_\m + h/2, \\
q_{\m + 1/2} &= q_\m + (h/2) \nabla_p H(t_{\m+1/2}, q_{\m+1/2}, p_{\m}), \\
p_{\m+1} &= p_\m - (h/2)\left(\nabla_q H(t_{\m+1/2}, q_{\m+1/2}, p_{\m}) + \nabla_q H(t_{\m+1/2}, q_{\m+1/2}, p_{\m+1})\right), \\
t_{\m+1} &= t_{\m+1/2} + h/2, \\
q_{\m + 1} &= q_{\m+1/2} + (h/2) \nabla_p H(t_{\m+1/2}, q_{\m+1/2}, p_{\m+1}).
\end{split}
\end{equation}
Again, in general these methods are implicit but for
a separable Hamiltonian \eqref{separable} they become completely
explicit in all variables.
Moreover, only one gradient computation per iteration is necessary
even though these methods are of order $r=2$.

Let us comment on  a useful numerical trick.
In practice, for cases such as \eqref{quad_leap}, large exponentials
can be numerically unstable.  This can be avoided by redefining
variables in the algorithm. For instance, in the case of \eqref{quad_leap} one can introduce $\tilde{p}_{\m}  \equiv  e^{-\eta_2(t_\m)} p_\m$ to rewrite the updates as
\be
\begin{split}
\tilde{p}_{\m +1/2} &= e^{-\Delta_{h/2}^{(2,2)}(t_\m) } \left(
\tilde{p}_\m - (h/2)\nabla f(q_\m) \right), \\
t_{\m + 1} &= t_\m + h, \\
q_{\m+1} &= q_\m + (h/2)\left( e^{\Delta_{h/2}^{(2,1)}(t_\m)} +
e^{-\Delta_{h/2}^{(1,2)}(t_{\m+1/2})} \right) \tilde{p}_{\m+1/2}, \\
\tilde{p}_{\m+1} &= e^{-\Delta_{h/2}^{(2,2)}(t_{\m+1/2})} \tilde{p}_{\m+1/2} - (h/2) \nabla f(q_{\m+1}),
\end{split}
\ee
where $\Delta^{(a,b)}_{h}(t) \equiv \eta_a(t + h) - \eta_b(t )$.
Note that only finite differences appear in the exponentials
which prevent very large (or small) numbers in a numerical implementation.
For instance, in the case where $\eta_2 = \eta_1 = \gamma t$, which
was used in the experiments of Section~\ref{rbm_sec}, we have the following
method based on the presymplectic leapfrog:
\be
\begin{split}
\tilde{p}_{\m +1/2} &= \mu \left(
\tilde{p}_\m - (h/2)\nabla f(q_\m) \right), \\
q_{\m+1} &= q_\m + h \cosh ( -\log \mu ) \tilde{p}_{\m+1/2}, \\
\tilde{p}_{\m+1} &= \mu \tilde{p}_{\m+1/2} - (h/2) \nabla f(q_{\m+1}),
\end{split}
\ee
where $\mu \equiv e^{-\gamma h/2}$ (we did not write the
update for $t$ which is not important if one is only interested
in $q$, and in this case one can be even more economical by replacing the last update
into the first). One thus has tuning
parameters $h$ and $\mu$ for the above method.

\subsection{Higher-order methods}
In \cite{Suzuki:1990,Yoshida:1990} an elegant and general approach to
construct arbitrarily higher-order symplectic integrators was presented.
It assumes that a base method
$\flowN_h$ of order $2r$ ($r \ge 1$) is given and
an integrator of order $2r + 2$ is then obtained by the composition
\be \label{yoshida_comp}
\flowN_{\tau_0 h} \circ \flowN_{\tau_1 h} \circ \flowN_{\tau_0 h}
\ee
where
\be
\tau_0 \equiv \dfrac{1}{2  - \kappa}, \qquad
\tau_1 \equiv - \dfrac{\kappa}{2-\kappa}, \qquad
\kappa^{2r+1} \equiv 2.
\ee
One can start with any base method of choice, such as the leapfrog.
From this new integrator of order $2r + 2$ one may proceeds recursively
to construct even higher-order methods.

It is straightforward to adapt this procedure to presymplectic integrators
by carefully adding an update for time $t$.
However, let us mention that the number of gradient computations of the
Suzuki-Yoshida approach \eqref{yoshida_comp} grows very fast with
increasing order, quickly becoming unfeasible.
Moreover, the truncation error of such methods tend to be rather large,
although the fourth-order method obtained from the leapfrog is
competitive and interesting.
(See \cite{McLachlan:2006} and especially \cite{McLachlan:2002b} for an
interesting discussion.)

\bibliography{biblio.bib}

\end{document}